\newtheorem{assumption}[theorem]{Assumption}
\newtheorem{remark}[theorem]{Remark}
\title{Inverse random source scattering for the Helmholtz equation with
attenuation\thanks{The research was supported in part by the NSF grant
DMS-1912704.}}
\author{Peijun Li\thanks{Department of Mathematics, Purdue University, West
Lafayette, Indiana 47907, USA. ({\tt lipeijun@math.purdue.edu}).}  \and Xu
Wang\thanks{Department of Mathematics, Purdue University, West Lafayette,
Indiana 47907, USA. ({\tt wang4191@purdue.edu}).} 
}
\begin{document}

\maketitle

\begin{abstract}
In this paper, a new model is proposed for the inverse random source scattering
problem of the Helmholtz equation with attenuation. The source is assumed to be
driven by a fractional Gaussian field whose covariance is represented by a
classical pseudo-differential operator. The work contains three contributions.
First, the connection is established between fractional Gaussian fields and
rough sources characterized by their principal symbols. Second, the direct
source scattering problem is shown to be well-posed in the distribution sense.
Third, we demonstrate that the micro-correlation strength of the random source
can be uniquely determined by the passive measurements of the wave field in a
set which is disjoint with the support of the strength function. The analysis
relies on careful studies on the Green function and Fourier integrals for the
Helmholtz equation.
\end{abstract}

\begin{keywords}
Inverse scattering problem, the Helmholtz equation, random source, fractional
Gaussian field, pseudo-differential operator, principal symbol
\end{keywords}

\begin{AMS}
78A46, 65C30
\end{AMS}

\pagestyle{myheadings}
\thispagestyle{plain}
\markboth{P. Li and X. Wang}{Inverse Random Source Scattering}

\section{Introduction}

The inverse source scattering in waves is an important and active research
subject in inverse scattering theory. It is an important mathematical tool for
the solution of many medical imaging modalities \cite{ABF02, FKM04}. The inverse
source scattering problems are to determine the unknown sources that generate
prescribed wave patterns. These problems have attracted much research. The
mathematical and numerical results can be found in \cite{BLZ19, BLT10, IL18} and
the references cited therein. 

Stochastic modeling is widely introduced to mathematical systems due
to unpredictability of the environments, incomplete knowledge of the systems and
measurements, and fine-scale fluctuations in simulation. In many situations,
the source, hence the wave field, may not be deterministic but are rather
modeled by random processes \cite{D79}. Due
to the extra challenge of randomness and uncertainties, little is known for the
inverse random source scattering problems. 

In this paper, we consider the Helmholtz equation with a random source
\begin{equation}\label{eq:model}
\Delta u+(k^2+{\rm i} k\sigma)u=f,\quad x\in{\mathbb{R}}^d,
\end{equation}
where $d=2$ or $3$, $k>0$ is the wavenumber, the attenuation
coefficient $\sigma\ge0$ describes the electrical
conductivity of the medium, $u$ denotes the wave field, and $f$ is a random
function representing the electric current density. 

In \cite{BCL16}, the white noise model was studied for the inverse random
source problem of the stochastic Helmholtz equation without attenuation
\[
 \Delta u+k^2 u=g+h\dot{W},\quad x\in\mathbb R^d,
\]
where $g$ and $h$ are deterministic and compactly supported functions, and $\dot
W$ is the spatial white noise. It was shown that $g$ and $h$ can be determined
by statistics of the wave fields at multiple frequencies. The white noise model
can also be found in \cite{BCLZ14} and \cite{BCL17} for the one-dimensional
problem and the stochastic elastic wave equation, respectively. Recently, the
model of a generalized Gaussian field was developed to handle random processes
\cite{CHL19, HLO14}. The random function is said to be microlocally isotropic of
order $2s$ if the covariance operator is a pseudo-differential operator with
principal symbol given by $\mu(x)|\xi|^{-2s}$, where $\mu\geq 0$ is a smooth and
compactly support function and is called the micro-correlation strength of the
random function. It was shown that $\mu$ can be uniquely determined by the wave
field averaged over the frequency band at a single realization of the random
function. This model was also investigated in \cite{LHL, LL19} for the inverse
random source problems of the elastic wave equation and the Helmholtz equation
without attenuation. In these work, the parameter $s\in[\frac
d2,\frac{d}2+1)$ and the random functions are smoother than the
white noise (cf. Lemma \ref{lm:regu}): it can be interpreted as a distribution
in $W^{-\epsilon,p}({\mathbb{R}}^d)$ for any $\epsilon>0$ and $p\in(1,\infty)$
if $s=\frac d2$; it is a function in $C^{0, \alpha}({\mathbb{R}}^d)$
for any $\alpha\in(0,s-\frac d2)$ if $s\in(\frac d2,\frac d2+1)$.

In this work, we consider a new model for the Helmholtz equation
\eqref{eq:model}, where the random source $f$ is driven by a fractional
Gaussian field with $s\in[0,\frac d2+1)$. There are three contributions. First,
we demonstrate that the fractional Gaussian fields include the classical
fractional Brownian fields. Moreover, we establish the connection between the
fractional Gaussian fields and rough sources characterized by their principal
symbols. Second, we examine the regularity of the random source and show that
the direct scattering problem is well-posed in the distribution sense. Third,
for the inverse problem, we prove that the strength of the random source $\mu$
can be uniquely determined by the high frequency limit of the second moment of
the wave field. In particular, if $\sigma=0$, the strength $\mu$ can also be
determined uniquely by the amplitude of the wave field averaged over the
frequency band at a single realization of the random source. It is worthy to be
pointed out that (1) if $s\in[0,\frac d2]$, the random function is a
distribution in $f\in W^{s-\frac d2-\epsilon,p}$ for any $\epsilon>0$ and
$p\in(1,\infty)$ (cf. Lemma \ref{lm:regu}), which is rougher than those
considered in \cite{CHL19, HLO14, LHL, LL19}; (2) if $\sigma=0$ and $s\in[\frac
d2,\frac d2+1)$, the results obtained in this paper coincides with the ones
given in \cite{LHL}.

The paper is organized as follows. In Section \ref{sec:source}, the random
source model is introduced. The relationship is established between
the fractional Gaussian field and the classical fractional Brownian
motion; the regularity is studied for the random source. Section
\ref{sec:direct} addresses the well-posedness and regularity of the solution for
the direct problem. The inverse problem is discussed in Section
\ref{sec:inverse}, where the two- and three-dimensional problems are considered
separately. The paper is concluded with some general remarks and directions for
future work in Section \ref{sec:conc}.

\section{Random source}
\label{sec:source}

In this section, we give a general description of the random source on
${\mathbb{R}}^d$. Let $f$ be a real-valued centered random
field defined on a completed probability space
$(\Omega,\mathcal{F},\mathbb{P})$. Introduce the following Sobolev spaces. The
details can be found in \cite{AF03}. 

\begin{itemize}

\item $W^{s,p}:=W^{s,p}({\mathbb{R}}^d)$ for $s\in{\mathbb{R}}$ and
$p\in(1,\infty)$. 
In particular, if $p=2$, denote $H^{s}:=W^{s,2}$.

\item Denote by $W_{\rm loc}^{s,p}$ the space of functions which are locally in
$W^{s,p}$. More precisely, for any precompact subset
${\mathcal{O}}\subset{\mathbb{R}}^d$, $u|_{{\mathcal{O}}}\in
W^{s,p}({\mathcal{O}})$. 

\item Denote by $W_{\rm comp}^{s,p}$ the space of functions in $W^{s,p}$ with
compact support. 

\item Denote by $W_0^{s,p}({\mathcal{O}})$ the closure of
$C_0^{\infty}({\mathcal{O}})$ in $W^{s,p}({\mathcal{O}})$ with
${\mathcal{O}}\subset{\mathbb{R}}^d$. In particular, if
${\mathcal{O}}={\mathbb{R}}^d$, $W_0^{s,p}=W^{s,p}$.

\end{itemize}

Let $f:\Omega\to\mathcal{S}'$ be measurable such that the mapping
$\omega\mapsto\langle f(\omega),\phi\rangle$ defines a Gaussian random variable
for any $\phi\in C_0^{\infty}$. Here, $\mathcal{S}'$ is the space of
distributions on ${\mathbb{R}}^d$, which is the dual space of the Schwartz space
$\mathcal{S}$. The covariance operator $Q_f:C_0^{\infty}\to\mathcal{S}'$ is
given by
\[
\langle \varphi,Q_f\psi\rangle={\mathbb{E}}[\langle f,\varphi\rangle\langle
f,\psi\rangle]\quad\forall\,\varphi,\psi\in C_0^{\infty},
\]
where $\langle\cdot,\cdot\rangle$ denotes the dual product.
Denote by $K_f(x,y)$ the Schwartz kernel of $Q_f$, which satisfies
\[
\langle \varphi,Q_f\psi\rangle=\int_{{\mathbb{R}}^d}\int_{{\mathbb{R}}^d}
K_f(x,y)\varphi(x)\psi(y)dxdy. 
\]
Hence we have the following formal expression of the Schwartz kernel:
\[
K_f(x,y)={\mathbb{E}}[f(x)f(y)].
\]

\begin{assumption}\label{as:f}
The source $f$ is assumed to have a compact support contained
in ${\mathcal{D}}\subset{\mathbb{R}}^d$.
The covariance operator $Q_f$ of $f$ is a classical pseudo-differential operator
with the principal symbol $\mu(x)|\xi|^{-2s}$, where $s\in\left[0,\frac
d2+1\right)$ and $0\leq \mu\in C_0^{\infty}({\mathcal{D}})$. 
\end{assumption}

The positive function $\mu$ stands for the micro-correlation strength of the
random field $f$. The assumption implies that the covariance operator $Q_f$
satisfies
\begin{align*}
(Q_f\psi)(x)=\frac1{(2\pi)^d}\int_{{\mathbb{R}}^d}e^{{\rm i} x\cdot
\xi}c(x,\xi)\hat\psi(\xi)d\xi\quad\forall\,\psi\in C_0^{\infty},
\end{align*}
where the symbol $c(x,\xi)$ has the leading term $\mu(x)|\xi|^{-2s}$ and 
\[
\hat\psi(\xi)=\mathcal{F}[\psi](\xi)=\int_{{\mathbb{R}}^d}e^{-{\rm i}
x\cdot\xi}\psi(x)dx
\]
is the Fourier transform of $\psi$ \cite{H03,H07}. By the expression of
$Q_f\psi$, we can deduce the relationship between the kernel $K_f$
and the symbol $c(x,\xi)$. In fact, noting that
\begin{align*}
\langle\varphi,Q_f\psi\rangle
=&\int_{{\mathbb{R}}^d}\varphi(x)\left[\frac1{(2\pi)^{d}}\int_{{\mathbb{R}}^d}e^
{{\rm i} x\cdot \xi}c(x,\xi)\hat\psi(\xi)d\xi\right]dx\\
=&\frac1{(2\pi)^{d}}\int_{{\mathbb{R}}^d}\varphi(x)\int_{{\mathbb{R}}^d}e^{{
\mathbf{i}} x\cdot \xi}c(x,\xi)\left[\int_{{\mathbb{R}}^d}e^{-{\rm i}
y\cdot\xi}\psi(y)dy\right]d\xi dx\\
=&\int_{{\mathbb{R}}^d}\int_{{\mathbb{R}}^d}\left[\frac1{(2\pi)^d}\int_{{\mathbb
{R}}^d}e^{{\rm i}(x-y)\cdot\xi}c(x,\xi)d\xi\right]\varphi(x)\psi(y)dxdy,
\end{align*}
we get that the kernel $K_f$ is an oscillatory integral of the form
\begin{equation}\label{eq:Kf}
K_f(x,y)=\frac1{(2\pi)^d}\int_{{\mathbb{R}}^d}e^{{\rm i}(x-y)\cdot\xi}c(x,
\xi)d\xi.
\end{equation}

\subsection{Fractional Gaussian fields}

We introduce the fractional Gaussian fields, which can be used to generate
random fields satisfying Assumption \ref{as:f}.

\begin{definition}\label{df:FGF}
The fractional Gaussian field $h^s$ on ${\mathbb{R}}^d$ with parameter
$s\in{\mathbb{R}}$ is given by 
\[
h^s:=(-\Delta)^{-\frac s2}\dot{W},
\]
where $(-\Delta)^{-\frac s2}$ is the fractional Laplacian on ${\mathbb{R}}^d$
defined by
\begin{equation}\label{eq:fraclap}
(-\Delta)^{\alpha}u=\mathcal{F}^{-1}\left[|\xi|^{2\alpha}\mathcal{F}[u]
(\xi)\right],\quad \alpha\in\mathbb{R},
\end{equation}
and $\dot{W}\in\mathcal{S}'$ is the white noise on ${\mathbb{R}}^d$
determined by the covariance operator $Q_{\dot{W}}:L^2\to L^2$ as follows:
\[
\langle\varphi,Q_{\dot{W}}\psi\rangle:={\mathbb{E}}[\langle\dot{W},
\varphi\rangle\langle\dot{W},\psi\rangle]=(\varphi,\psi)_{L^2}
\quad\forall\,\varphi , \psi\in L^2.
\]
\end{definition}

We denote by $\mathbb G_s({\mathbb{R}}^d)$ the space of fractional
Gaussian fields with parameter $s$. Let $h^s\sim
\mathbb G_s({\mathbb{R}}^d)$ if $h^s$ is a fractional Gaussian field on
${\mathbb{R}}^d$ with parameter $s$.
If $d=1$ and $s=1$, $h^1$ turns to be the classical one-dimensional Brownian
motion. If $s=0$, $h^0=\dot{W}$ is the white noise on ${\mathbb{R}}^d$.
If $s<0$, $h^s$ is even rougher than the white noise.
We refer to \cite{LSSW16} and references therein for more details about the 
fractional Gaussian fields and the fractional Laplacian. 

To make sense of the expression $h^s=(-\Delta)^{-\frac s2}\dot{W}$, we define
\[
\mathcal{S}_r:=
\begin{cases}
\left\{\varphi\in\mathcal{S}:\int_{{\mathbb{R}}^d}x^\alpha\varphi(x)dx=0
\quad \forall\,|\alpha|\le r\right\} &\quad\text{if}~r\ge0\\
\mathcal{S}&\quad\text{if}~r<0
\end{cases}
\]
Denote by $T_s$ the closure of $\mathcal{S}_{s-\frac d2}$ in $H^{-s}$. 
Then the expression $h^s=(-\Delta)^{-\frac s2}\dot{W}$ in Definition
\ref{df:FGF} is interpreted by
\[
\langle h^s,\varphi\rangle:=\langle\dot{W},(-\Delta)^{-\frac
s2}\varphi\rangle=\int_{{\mathbb{R}}^d}(-\Delta)^{-\frac
s2}\varphi(x)dW(x)\quad\forall\,\varphi\in T_s.
\]
The kernel $K_{h^s}$ for the covariance operator $Q_{h^s}$ of $h^s$
satisfies
\begin{equation}\label{eq:Qhs}
\langle \varphi,Q_{h^s}\psi\rangle=\int_{{\mathbb{R}}^d}\int_{{\mathbb{R}}^d}
K_{h^s}(x,y)\varphi(x)\psi(y)dxdy\quad\forall\,\varphi,\psi\in C_0^{\infty}\cap
T_s. 
\end{equation}
Moreover, the kernel has the following expression. The proof can be found in
\cite{LSSW16}. 

\begin{lemma}\label{lm:kernel}
Let $h^s\sim\mathbb G_s({\mathbb{R}}^d)$ with parameter $s\in[0,\infty)$. Denote
$H:=s-\frac d2$. 
\begin{itemize}
\item[(i)] If $s\in(0,\infty)$ and $H$ is not a nonnegative integer, then
\[
K_{h^s}(x,y)=C_1(s,d)|x-y|^{2H},
\]
where  $C_1(s,d)=2^{-2s}\pi^{-\frac
d2}\Gamma(\frac d2-s)/\Gamma(s)$ with $\Gamma(\cdot)$ being the Gamma function.
\item[(ii)] If $s\in(0,\infty)$ and $H$ is a nonnegative integer, then
\[
K_{h^s}(x,y)=C_2(s,d)|x-y|^{2H}\ln|x-y|,
\]
where 
$C_2(s,d)=(-1)^{H+1}2^{-2s+1}\pi^{-\frac d2}/(H!\Gamma(s)).$
\item[(iii)] If $s=0$, then 
\[
K_{h^s}(x,y)=\delta(x-y),
\]
where $\delta(\cdot)$ is the Dirac delta function centered at $0$.
\end{itemize}
\end{lemma}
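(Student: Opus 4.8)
The statement is really a computation of the Schwartz kernel of the operator $Q_{h^s} = (-\Delta)^{-s/2}\, Q_{\dot W}\, (-\Delta)^{-s/2} = (-\Delta)^{-s}$, acting on the quotient space $\mathcal S_{s-d/2}$, so the plan is to compute the (tempered, modulo polynomials) inverse Fourier transform of the symbol $|\xi|^{-2s}$. Concretely, for $\varphi,\psi\in C_0^\infty\cap T_s$ one writes, using Plancherel and Definition \ref{df:FGF},
\[
\langle\varphi,Q_{h^s}\psi\rangle=\langle(-\Delta)^{-s/2}\varphi,(-\Delta)^{-s/2}\psi\rangle_{L^2}
=\frac1{(2\pi)^d}\int_{\mathbb R^d}|\xi|^{-2s}\,\widehat\varphi(\xi)\,\overline{\widehat\psi(\xi)}\,d\xi,
\]
and the goal is to identify the distributional kernel $K_{h^s}(x,y)=\mathcal F^{-1}[|\xi|^{-2s}](x-y)$. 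The three cases of the lemma correspond exactly to the three cases in the classical formula for the Fourier transform of the Riesz kernel $|\xi|^{-2s}$: a pure power $|x-y|^{2s-d}$ when $2s-d\notin 2\mathbb Z_{\ge0}$ (equivalently $H=s-d/2$ is not a nonnegative integer), a power times a logarithm when $2s-d\in 2\mathbb Z_{\ge0}$ (the "resonant" case where the Gamma function $\Gamma(d/2-s)$ has a pole), and the Dirac delta when $s=0$. The moment conditions built into $T_s$ (i.e.\ $\widehat\varphi$ vanishing to order $s-d/2$ at the origin) are precisely what is needed to kill the polynomial ambiguity in $\mathcal F^{-1}[|\xi|^{-2s}]$ and to make the integral $\int|\xi|^{-2s}|\widehat\varphi(\xi)|^2d\xi$ converge near $\xi=0$ when $2s\ge d$.

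The key steps, in order, are: (1) reduce to computing $\mathcal F^{-1}[|\xi|^{-2s}]$ as a tempered distribution modulo polynomials, justifying the pairing against test functions in $T_s$ so that low-frequency convergence holds; (2) for $2s<d$ (so $H<0$), invoke the standard Riesz-potential identity $\mathcal F[|x|^{-\beta}] = c(\beta,d)|\xi|^{\beta-d}$ with $\beta=d-2s$, which directly gives case (i) with the constant $C_1(s,d)$ after tracking the $2^{\,\cdot}\pi^{\,\cdot}\Gamma$-factors; (3) extend to general non-integer $H$ by analytic continuation in $s$ of the meromorphic family $|\xi|^{-2s}/\Gamma(s)$ (the normalization by $1/\Gamma(s)$ makes the family entire away from the dimension-dependent poles), which is cleaner than re-deriving the integral for each range of $s$; (4) at the exceptional values where $H\in\mathbb Z_{\ge0}$, extract the finite part: $\Gamma(d/2-s)$ has a simple pole there, and expanding $C_1(s,d)|x-y|^{2H}$ to first order in the deviation of $s$ from the resonant value produces the $\ln|x-y|$ term with coefficient $C_2(s,d)$ (a residue computation for $\Gamma$, using $\frac{d}{dH}|x-y|^{2H}=2|x-y|^{2H}\ln|x-y|$); (5) the case $s=0$ is immediate since $|\xi|^0=1$ and $\mathcal F^{-1}[1]=\delta$.

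Since the lemma explicitly says the proof "can be found in \cite{LSSW16}," I would keep this brief and cite that reference for the detailed constant-chasing; the essential mathematical content I would present is the reduction in step (1)–(2) and the residue/finite-part mechanism in step (4). The main obstacle — and the only genuinely delicate point — is the bookkeeping at the resonant values $H\in\mathbb Z_{\ge0}$: one must be careful that the polynomial part $|x-y|^{2H}$ multiplying the pole is itself annihilated by the moment conditions defining $T_s$ (so that only the $\ln$-term survives when pairing against $T_s$-test functions), and that the residue of $\Gamma$ at its poles combined with the $2^{-2s}\pi^{-d/2}/\Gamma(s)$ prefactor yields exactly $(-1)^{H+1}2^{-2s+1}\pi^{-d/2}/(H!\,\Gamma(s))$. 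Getting the sign $(-1)^{H+1}$ and the factor $H!$ right is the part most prone to error, and I would verify it against the known low-dimensional special cases (e.g.\ $d=1$, $s=1$ giving Brownian motion, where $H=1/2$ is non-integer, and $d=2$, $s=1$ giving $H=0$, the logarithmic Green-function case).
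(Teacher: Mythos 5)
Your proposal is correct and is essentially the argument the paper itself relies on: the paper gives no proof of Lemma \ref{lm:kernel}, deferring entirely to \cite{LSSW16}, and your reduction of $K_{h^s}$ to $\mathcal{F}^{-1}\left[|\xi|^{-2s}\right](x-y)$ via Plancherel, the Riesz-potential identity for $0<2s<d$, analytic continuation in $s$, and the residue/finite-part extraction at resonant $H\in\mathbb{Z}_{\ge0}$ is precisely the standard proof found in that reference, with your constants $C_1(s,d)$ and $C_2(s,d)$ (residue $(-1)^H/H!$ of $\Gamma$ at $-H$, factor $2$ from $\frac{d}{ds}|x|^{2s-d}$) checking out. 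One detail worth spelling out if you write it up: the polynomial coefficient $|x-y|^{2H}$ of the pole has degree $2H$ while $T_s$ only imposes moments up to order $H$, so its annihilation uses that the kernel is paired against $\varphi(x)\psi(y)$ in both variables (each monomial $x^\alpha y^\beta$ with $|\alpha|+|\beta|=2H$ has either $|\alpha|\le H$ or $|\beta|\le H$).
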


\subsection{Relationship with classical fractional Brownian fields}

For any $h^s\sim \mathbb G_s({\mathbb{R}}^d)$, we define its generalized Hurst
parameter $H=s-\frac d2$. If $s\in(\frac d2,\frac d2+1)$,
$h^s$ coincides with the classical fractional Brownian fields $B^H$ determined
by the covariance operator $Q_{B^H}$:
\begin{equation}\label{eq:QBH}
\langle \varphi,Q_{B^H}\psi\rangle=\int_{{\mathbb{R}}^d}\int_{{\mathbb{R}}^d}
\frac12\left[|x|^{2H}+|y|^{2H}-|x-y|^{2H}\right]\varphi(x)\psi(y)dxdy,
\end{equation}
where the Hurst parameter $H\in(0,1)$. 

\begin{lemma}\label{lm:fBm}
Let $s\in(\frac d2,\frac d2+1)$ and $h^s\sim\mathbb G_s({\mathbb{R}}^d)$. Then
the stochastic process defined by 
\[
\tilde{h}^s(x)=\langle h^s,\delta_x-\delta_0\rangle
\] 
has the same distribution as the fractional Brownian field $B^H$ with 
$H=s-\frac d2\in(0,1)$ up to a multiplicative constant, where
$\delta_x(\cdot)\in H^{-s}$ is the Dirac measure centered at
$x\in{\mathbb{R}}^d$.
\end{lemma}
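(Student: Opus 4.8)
The plan is to use that both $\tilde h^s$ and the fractional Brownian field $B^H$ are centered Gaussian fields, so that equality in law reduces to matching their covariance kernels. First I would check that $\tilde h^s(x)=\langle h^s,\delta_x-\delta_0\rangle$ is well defined. Since $s>\frac d2$, the Dirac measure $\delta_x$ belongs to $H^{-s}$; since $H=s-\frac d2\in(0,1)$, the space $\mathcal S_{s-\frac d2}$ consists precisely of Schwartz functions with vanishing integral. The compactly supported distribution $\delta_x-\delta_0$ satisfies $\langle\delta_x-\delta_0,1\rangle=0$ and is approximated in $H^{-s}$ by $\varphi_n:=\rho_n(\cdot-x)-\rho_n(\cdot)\in C_0^\infty\cap\mathcal S_{s-\frac d2}$ for a standard mollifier $\rho_n$, so $\delta_x-\delta_0\in T_s$ and $\langle h^s,\delta_x-\delta_0\rangle$ is the $L^2(\Omega)$-limit of the Gaussian variables $\langle h^s,\varphi_n\rangle$, using the isometry $\mathbb E|\langle h^s,\varphi\rangle|^2=\|\varphi\|_{H^{-s}}^2$ that follows from Definition \ref{df:FGF}. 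In particular $\tilde h^s(0)=0$ and $\{\tilde h^s(x)\}_{x\in\mathbb R^d}$ is a centered Gaussian field.

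Next I would compute its covariance. Passing \eqref{eq:Qhs} to the limit along $\varphi_n\to\delta_x-\delta_0$ and $\psi_n\to\delta_y-\delta_0$ in $H^{-s}$ (the left-hand side converges by the $L^2(\Omega)$-convergence above, the right-hand side by continuity of the kernel), we obtain
\[
\mathbb E[\tilde h^s(x)\tilde h^s(y)]=\iint_{\mathbb R^d\times\mathbb R^d}K_{h^s}(u,v)\,(\delta_x-\delta_0)(u)\,(\delta_y-\delta_0)(v)\,du\,dv.
\]
Because $H=s-\frac d2\in(0,1)$ is not a nonnegative integer, part (i) of Lemma \ref{lm:kernel} gives the continuous kernel $K_{h^s}(u,v)=C_1(s,d)|u-v|^{2H}$, and the pairing is just evaluation:
\[
\mathbb E[\tilde h^s(x)\tilde h^s(y)]=C_1(s,d)\bigl(|x-y|^{2H}-|x|^{2H}-|y|^{2H}\bigr)=-2\,C_1(s,d)\cdot\tfrac12\bigl(|x|^{2H}+|y|^{2H}-|x-y|^{2H}\bigr).
\]
By \eqref{eq:QBH} this is exactly $-2C_1(s,d)$ times the covariance of $B^H$, and $-2C_1(s,d)>0$ since for $s\in(\frac d2,\frac d2+1)$ we have $\frac d2-s\in(-1,0)$, so $\Gamma(\frac d2-s)<0$ while $\Gamma(s)>0$, forcing $C_1(s,d)<0$. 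Therefore $\tilde h^s$ and $\sqrt{-2C_1(s,d)}\,B^H$ are centered Gaussian fields with identical covariance, hence identical finite-dimensional distributions, which proves the claim with multiplicative constant $\sqrt{-2C_1(s,d)}$.

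The step I expect to require the most care is the extension of the kernel identity \eqref{eq:Qhs}, stated a priori only for $\varphi,\psi\in C_0^\infty\cap T_s$, to the singular pairing against $\delta_x-\delta_0$: one must verify both that $\delta_x-\delta_0\in T_s$ and that the mollified pairings are compatible with the oscillatory-integral form of $K_{h^s}$ in the limit. Since $2H\in(0,2)$, the kernel $C_1(s,d)|u-v|^{2H}$ is a genuine continuous function of at most polynomial growth, so $Q_{h^s}(\delta_y-\delta_0)$ is continuous and its pairing with the compactly supported $\delta_x-\delta_0$ is unproblematic once the density argument is in place; everything else is bookkeeping of the constants from Lemma \ref{lm:kernel}.
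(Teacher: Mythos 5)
Your proof is correct and takes essentially the same route as the paper: compute $\mathbb{E}[\tilde h^s(x)\tilde h^s(y)]$ from the explicit kernel $C_1(s,d)|u-v|^{2H}$ of Lemma \ref{lm:kernel}, recognize it as a scalar multiple of the fractional Brownian covariance \eqref{eq:QBH}, and conclude since a centered Gaussian field is determined by its covariance. Your extra verifications (that $\delta_x-\delta_0\in T_s$ via mollification and that $-2C_1(s,d)>0$, so the multiplicative constant $\sqrt{-2C_1(s,d)}$ makes sense) are details the paper leaves implicit.
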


\begin{proof}
By Theorem \ref{lm:kernel}, the kernel of the covariance operator reads
\begin{eqnarray*}
{\mathbb{E}}[\tilde h^s(x)\tilde h^s(y)]
&=&{\mathbb{E}}[\langle h^s,\delta_x-\delta_0\rangle\langle
h^s,\delta_y-\delta_0\rangle]\\
&=&C_1(s,d)\int_{{\mathbb{R}}^d}\int_{{\mathbb{R}}^d}|r_1-r_2|^{2H}
(\delta_x-\delta_0)(r_1)(\delta_y-\delta_0)(r_2)dr_1dr_2\\
&=&C_1(s,d)\left(|x-y|^{2H}-|x|^{2H}-|y|^{2H}\right),
\end{eqnarray*}
which is a scalar multiple of the kernel of the covariance operator $Q_{B^H}$
defined in \eqref{eq:QBH}. The result then follows from the fact that the
distribution of a centered Gaussian random field is unique determined by its
covariance operator.
\end{proof}

Note that $\langle h^s,\delta_x-\delta_0\rangle$ is actually a translation of
$h^s$. It indicates that we can identify $h^s\sim\mathbb G_s({\mathbb{R}}^d)$ as
the fractional Brownian field $B^H$ with $H=s-\frac d2\in(0,1)$ by fixing its
value to be zero at the origin. 
Define a random function 
\begin{equation}\label{eq:f}
f(x,\omega):=a(x)h^s(x,\omega),\quad x\in{\mathbb{R}}^d,~\omega\in\Omega,
\end{equation}
where $s\in(\frac d2,\frac d2+1)$ and $a\in C_0^{\infty}$ with
supp$(a)\subset{\mathcal{D}}$. We claim that such an $f$ defined above satisfies
Assumption \ref{as:f}. More precisely, the covariance operator $Q_f$ of $f$ has
the principal symbol $a^2(x)|\xi|^{-2s}$ up to a multiplicative constant.

\begin{proposition}\label{ps:rf}
The random field $f$ defined in \eqref{eq:f} with $s\in[0,\frac d2+1)$ satisfies
Assumption \ref{as:f} with $\mu=a^2$.
\end{proposition}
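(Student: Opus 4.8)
The plan is to compute the covariance operator $Q_f$ directly from that of $h^s$ and then read off its principal symbol via the pseudo-differential symbol calculus. First, since $a\in C_0^\infty$ with $\mathrm{supp}(a)\subset\mathcal{D}$, the field $f=ah^s$ is supported in $\mathcal{D}$, which gives the first half of Assumption \ref{as:f}. Next, for test functions $\varphi,\psi$ one has $\langle f,\varphi\rangle=\langle h^s,a\varphi\rangle$ (understood distributionally when $s\le d/2$, and pointwise through the fractional Brownian field $\tilde h^s$ of Lemma \ref{lm:fBm} when $s\in(d/2,d/2+1)$), so that
\[
\langle\varphi,Q_f\psi\rangle=\mathbb{E}\big[\langle h^s,a\varphi\rangle\,\langle h^s,a\psi\rangle\big]=\langle a\varphi,\,Q_{h^s}(a\psi)\rangle ,
\]
i.e.\ $Q_f=M_a\,Q_{h^s}\,M_a$, where $M_a$ denotes multiplication by $a$; equivalently, the Schwartz kernel of $Q_f$ is $K_f(x,y)=a(x)a(y)K_{h^s}(x,y)$ with $K_{h^s}$ given by Lemma \ref{lm:kernel}.

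The second step is to identify $Q_{h^s}$ as a classical $\Psi$DO of order $-2s$ with principal symbol $|\xi|^{-2s}$. From Definition \ref{df:FGF} and $Q_{\dot W}=I$ one has $Q_{h^s}=(-\Delta)^{-s/2}(-\Delta)^{-s/2}=(-\Delta)^{-s}$, the Fourier multiplier with symbol $|\xi|^{-2s}$; alternatively, the kernel in Lemma \ref{lm:kernel} depends only on $x-y$, and the classical Fourier-transform identities for the Riesz kernel $|x|^{2H}$ (respectively for $|x|^{2H}\ln|x|$ in the exceptional integer case, and for $\delta$ when $s=0$) show that its partial Fourier transform in $x-y$ equals $|\xi|^{-2s}$ once the constants $C_1(s,d),C_2(s,d)$ of Lemma \ref{lm:kernel} are combined with the Riesz constants. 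Thus, modulo a smoothing operator accounting for the behavior near $\xi=0$, $Q_{h^s}$ is a classical $\Psi$DO with principal symbol $|\xi|^{-2s}$. In the range $s\in(d/2,d/2+1)$, where $h^s$ is realized as the fractional Brownian field, the covariance kernel carries the additional terms $-|x|^{2H}-|y|^{2H}$; these contribute only finite-rank (hence smoothing) operators to $Q_{h^s}$ and do not affect its principal symbol.

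Finally I would apply the composition calculus to $Q_f=M_a\,Q_{h^s}\,M_a$. Write $P:=Q_{h^s}$, with symbol $\sigma_P(x,\xi)$ whose principal part is $|\xi|^{-2s}$. Composing $P$ with $M_a$ on the right gives a classical $\Psi$DO of order $-2s$ whose symbol expands as $\sum_{\alpha}\frac{1}{\alpha!}\partial_\xi^\alpha\sigma_P(x,\xi)\,D_x^\alpha a(x)$, with principal part $a(x)|\xi|^{-2s}$ and remainder of order strictly below $-2s$; composing $M_a$ on the left is exact, since $\partial_\xi^\alpha a(x)=0$ for $\alpha\ne0$, and multiplies the whole symbol by $a(x)$. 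Hence $Q_f$ is a classical $\Psi$DO of order $-2s$ with principal symbol $a^2(x)|\xi|^{-2s}$, which is precisely Assumption \ref{as:f} with $\mu=a^2\in C_0^\infty(\mathcal{D})$, $\mu\ge0$ (an overall positive constant, if present, being absorbed into the normalization of $\mu$).

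The main obstacle will be the rigorous bookkeeping around $Q_{h^s}$ rather than the symbol calculus, which is routine once $P$ is in hand: one must verify that the rough translation-invariant kernels $|x-y|^{2H}$ and $\ln|x-y|$ transform, under the partial Fourier transform, into genuine classical symbols — handling the singularity of $|\xi|^{-2s}$ at the origin by a cutoff modulo a smoothing error, and treating the borderline case $s=d/2$, $H=0$ separately — and that the two interpretations of $h^s$ (the distributional fractional Gaussian field for $s\le d/2$ and the pointwise fractional Brownian field for $s\in(d/2,d/2+1)$), together with the vanishing-moment restriction needed to pair $h^s$ with $a\varphi$ in the latter regime, perturb $Q_f$ only by lower-order, finite-rank terms and thus leave the principal symbol $a^2(x)|\xi|^{-2s}$ unchanged.
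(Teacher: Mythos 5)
Your argument is correct and its first half coincides with the paper's: the paper also starts from the white-noise representation, applies the It\^{o} isometry and Plancherel's theorem, and arrives at exactly your factorization, namely $\langle\varphi,Q_f\psi\rangle=\frac1{(2\pi)^d}\int|\xi|^{-2s}\overline{\widehat{(a\varphi)}}\,\widehat{(a\psi)}\,d\xi$, i.e.\ $Q_f=M_a(-\Delta)^{-s}M_a$. Where you diverge is the extraction of the principal symbol: you invoke the H\"ormander composition calculus for $(-\Delta)^{-s}\circ M_a$ and then the exact left multiplication by $M_a$, whereas the paper does the corresponding step by hand, writing $a(x)a(y)=a^2(x)-a(x)\big(a(x)-a(y)\big)$, using the mean value theorem on $a(x)-a(y)$ to see that the second piece is one order smoother, and comparing the first piece directly with the oscillatory-integral kernel representation \eqref{eq:Kf}. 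The two routes encode the same mechanism (the paper's $I_1+I_2$ splitting is precisely the leading term plus remainder of your composition expansion); your version is cleaner and makes the order of the remainder explicit via the calculus, while the paper's is self-contained and avoids citing the composition theorem in this proof. Two minor caveats, neither a genuine gap by the paper's own standard of rigor: the rank-one corrections $|x|^{2H}$, $|y|^{2H}$ from the fractional Brownian normalization are indeed finite rank, but their kernels are only H\"older at the origin, so calling them ``smoothing'' is a slight overstatement (harmless, since your main line through the white-noise definition of $h^s$ never needs the $B^H$ realization); and the treatment of the $|\xi|^{-2s}$ singularity at $\xi=0$ and of the moment restrictions on test functions (the space $T_s$) is left informal, but the paper's proof is equally informal on exactly these points.
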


\begin{proof}
According to the expression of the kernel \eqref{eq:Kf} and Definition
\ref{df:FGF}, the covariance operator $Q_{h^s}$ of $h^s$ satisfies
\begin{eqnarray*}
\langle\varphi,Q_{h^s}\psi\rangle&=&~\mathbb{E}\left[\langle
h^s,\varphi\rangle\langle
h^s,\psi\rangle\right]=\mathbb{E}\left[\int_{{\mathbb{R}}^d}(-\Delta)^{-\frac
s2}(a\varphi) dW\int_{{\mathbb{R}}^d}(-\Delta)^{-\frac s2}(a\psi) dW\right]\\
&=&\int_{\mathbb{R}^d}(-\Delta)^{-\frac s2}(a\varphi)(-\Delta)^{-\frac
s2}(a\psi) dx\\
&=&~\frac1{(2\pi)^d}\int_{\mathbb{R}^d}\overline{\mathcal{F}\left[(-\Delta)^{
-\frac s2}(a\varphi)\right](\xi)}\mathcal{F}\left[(-\Delta)^{-\frac
s2}(a\psi)\right](\xi)d\xi,
\end{eqnarray*}
where the Plancherel theorem is used in the last step. It follows from the
definition of the fractional Laplacian given in \eqref{eq:fraclap} that we get
\begin{eqnarray*}
\langle\varphi,Q_{h^s}\psi\rangle&=&\frac1{(2\pi)^d}\int_{\mathbb{R}^d}|\xi|^{
-2s }\overline{\widehat{(a\varphi)}(\xi)}\widehat{(a\psi)}(\xi)d\xi\\
&=&~\frac1{(2\pi)^d}\int_{\mathbb{R}^d}|\xi|^{-2s}\left[\int_{\mathbb{R}^d}
a(x)\varphi(x)e^{{\rm i}x\cdot\xi}dx\right]\left[\int_{\mathbb{R}^d}
a(y)\psi(y)e^{-{\rm i}y\cdot\xi}dy\right]d\xi\\
&=&~\frac1{(2\pi)^d}\int_{\mathbb{R}^d}\int_{\mathbb{R}^d}\int_{\mathbb{R}^d}
\varphi(x)\psi(y)e^{{\rm i}(x-y)\cdot\xi}a^2(x)|\xi|^{-2s}d\xi dxdy\\
&&-\frac1{(2\pi)^d}\int_{\mathbb{R}^d}\int_{\mathbb{R}^d}\int_{\mathbb{R}^d}
\varphi(x)\psi(y)e^{{\rm i}(x-y)\cdot\xi}a(x)(a(x)-a(y))|\xi|^{-2s}d\xi
dxdy\\
&:=&I_1+I_2.
\end{eqnarray*}
Noting that $a(x)-a(y)=a'(\theta x+(1-\theta)y)(x-y)$ for some
$\theta\in(0,1)$ and 
\[
\int_{\mathbb{R}^d}e^{{\rm i}(x-y)\cdot\xi}|\xi|^{-2s}d\xi=(-\Delta)^{-s}
\delta(x-y),
\]
we obtain that the term $I_2$ is more regular than the term $I_1$. The proof is
completed by comparing the term $I_1$ with \eqref{eq:Kf}.
\end{proof}

\subsection{Regularity of random sources}

By Proposition \ref{ps:rf}, for any function $f$ satisfying Assumption
\ref{as:f} with parameter $s\in[0,\frac d2+1)$, its principal symbol has the
same order as the principal symbol of the random field
$a h^s$. 
Without loss of generality, we only need to investigate the regularity of random
fields given by $f=a h^s$, where $a\in C_0^{\infty}$ and
supp$(a)\subset{\mathcal{D}}$. 
Moreover, we assume that $f$ is a centered random field to avoid using the
modification $\langle h^s,\delta_x-\delta_0\rangle$.

\begin{lemma}\label{lm:regu}
Let $s\in[0,\frac d2+1)$ and $h\sim\mathbb G_s({\mathbb{R}}^d)$. Define the
random field $f:=a h^s$ with $a\in C_0^{\infty}$ and
supp$(a)\subset{\mathcal{D}}$.
\begin{itemize}
\item[(i)] If $s\in(\frac d2,\frac d2+1)$, then $f\in C^{0,\alpha}$ a.s. for
all $\alpha\in(0,s-\frac d2)$.
\item[(ii)] If $s\in[0,\frac d2]$, then $f\in W^{s-\frac d2-\epsilon,p}$ a.s.
for any $\epsilon>0$ and $p\in(1,\infty)$.
\end{itemize}
\end{lemma}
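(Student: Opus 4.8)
The plan is to handle the two regimes of $s$ with the two standard tools for Gaussian fields: a Kolmogorov--Chentsov continuity argument in the function-valued regime (part~(i)), and a Littlewood--Paley second-moment estimate in the distribution-valued regime (part~(ii)). In both cases the only probabilistic input is the explicit covariance of $h^s$ from Lemma~\ref{lm:kernel} --- equivalently, the fact that $Q_{h^s}$ has principal symbol $|\xi|^{-2s}$ --- together with the observation that multiplication by $a\in C_0^{\infty}$ produces only lower-order perturbations (cf.\ the proof of Proposition~\ref{ps:rf}).

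For part~(i), fix $s\in(\frac d2,\frac d2+1)$, so that $H:=s-\frac d2\in(0,1)$ is not a nonnegative integer and, by Lemma~\ref{lm:kernel}(i) (working with the modification $\langle h^s,\delta_{\cdot}-\delta_0\rangle$ of Lemma~\ref{lm:fBm} that realizes $h^s$ as a genuine process), $\mathbb E[|h^s(x)-h^s(y)|^2]\le C|x-y|^{2H}$ for $x,y$ in any bounded set. Writing $a(x)h^s(x)-a(y)h^s(y)=a(x)(h^s(x)-h^s(y))+(a(x)-a(y))h^s(y)$ and using $|a|\le C$, the Lipschitz bound $|a(x)-a(y)|\le C|x-y|$, and $2H<2$, I obtain $\mathbb E[|f(x)-f(y)|^2]\le C|x-y|^{2H}$ on bounded sets. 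Since $f(x)-f(y)$ is Gaussian, this upgrades to $\mathbb E[|f(x)-f(y)|^q]\le C_q|x-y|^{Hq}$ for every $q>0$, and the Kolmogorov--Chentsov theorem then produces a modification of $f$ that is $\alpha$-Hölder for every $\alpha<H-\frac dq$; sending $q\to\infty$ gives $f\in C^{0,\alpha}$ a.s.\ for all $\alpha\in(0,s-\frac d2)$, the smooth vanishing of $a$ at $\partial\,\mathrm{supp}(a)$ making the estimate uniform across the support boundary.

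For part~(ii), fix $s\in[0,\frac d2]$, $p\in(1,\infty)$, and a non-integer $t<s-\frac d2$ (which suffices, since $W^{t_1,p}\hookrightarrow W^{t_2,p}$ for $t_1\ge t_2$). Using the Littlewood--Paley characterization $\|g\|_{W^{t,p}}^p\sim\sum_{j\ge-1}2^{jtp}\|\Delta_j g\|_{L^p}^p$, Tonelli's theorem, and the Gaussianity of $\Delta_j f(x)$ (so that $\mathbb E[|\Delta_j f(x)|^p]=c_p\,\mathbb E[|\Delta_j f(x)|^2]^{p/2}$), it is enough to show $\sum_{j\ge-1}2^{jtp}\int\mathbb E[|\Delta_j f(x)|^2]^{p/2}\,dx<\infty$. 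Splitting $\Delta_j(ah^s)=a\,\Delta_j h^s+[\Delta_j,a]h^s$, the commutator gains a factor $2^{-j}$ and is strictly lower order, while for the main term the Fourier representation \eqref{eq:Kf} with symbol $|\xi|^{-2s}$ gives $\mathbb E[|\Delta_j h^s(x)|^2]=\frac1{(2\pi)^d}\int|\phi_j(\xi)|^2|\xi|^{-2s}\,d\xi\le C\,2^{j(d-2s)}$ for $j\ge0$ (with the bound replaced by a constant when $s=\frac d2$, the radial integral then being $\sim\log$ and bounded in $j$). Hence the sum is controlled by $C\|a\|_{L^p}^p\sum_{j\ge-1}2^{jp(t-s+\frac d2)}<\infty$, exactly because $t<s-\frac d2$; the low-frequency block $\Delta_{-1}f$ is treated separately using that $f$ is compactly supported with finite-variance pairings against Schwartz functions.

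The generic range $s\in(0,\frac d2)$ and the white-noise endpoint $s=0$ (symbol $\equiv1$, $\mathbb E[|\Delta_j h^s(x)|^2]\le C2^{jd}$) are routine. The step I expect to be the main obstacle is the endpoint $s=\frac d2$: there the kernel of $Q_{h^s}$ is $C_2(s,d)\ln|x-y|$ (Lemma~\ref{lm:kernel}(ii)), whose ``symbol'' $|\xi|^{-d}$ fails to be locally integrable at the origin, so the low-frequency contribution to $\|f\|_{W^{t,p}}$ must be estimated by hand; relatedly, making the reduction $K_f(x,y)\approx a(x)^2K_{h^s}(x,y)$ rigorous in the regime $H\ge0$, where $h^s$ is defined only up to an additive affine random field, requires passing to the centered modification exactly as in Lemma~\ref{lm:fBm} and Proposition~\ref{ps:rf}. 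Fortunately, since for $s=\frac d2$ the target exponent is merely $t<0$, there is enough slack to absorb these contributions.
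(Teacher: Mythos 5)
Your argument is sound, and for part (ii) it takes a genuinely different route from the paper. For (i) the content is the same: the paper simply invokes Lemma~\ref{lm:fBm} to identify $f$ in law with $aB^{H}$ and then cites the known $(H-\epsilon)$-H\"older continuity of the fractional Brownian field, whereas you re-derive that continuity from the covariance increments via Kolmogorov--Chentsov; this is just the standard proof of the fact the paper quotes. For (ii) the paper argues in physical space, following Theorem~2 of \cite{LPS08}: only the endpoint $s=\frac d2$ is treated directly, by applying the Bessel potential $\mathcal{J}_\epsilon=(I-\Delta)^{-\epsilon/2}$ with kernel $C(\epsilon,d)|x-y|^{-d+\epsilon}+S(x,y)$, observing that the resulting covariance kernel $a(x)a(y)\ln|x-y|\,|x-y|^{-d+\epsilon}$ is integrable so that $\mathcal{J}_\epsilon f\in L^p$ a.s., and then using the lifting property $\mathcal{J}_\epsilon:W^{t,p}\to W^{t+\epsilon,p}$; the range $s\in[0,\frac d2)$ is obtained by the soft factorization $f=a(-\Delta)^{-\frac s2+\frac d4}\tilde f$, $\tilde f=(-\Delta)^{-\frac d4}\dot W$, i.e.\ by applying an operator of order $\frac d2-s$ to the endpoint case. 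Your frequency-space Littlewood--Paley computation instead handles all $s\in[0,\frac d2]$ uniformly and yields the exponent $s-\frac d2-\epsilon$ directly from $\mathbb{E}|\Delta_j h^s(x)|^2\lesssim 2^{j(d-2s)}$ plus Gaussian moment equivalence; what the paper's route buys is that the delicate points you flag (the non-integrable symbol $|\xi|^{-d}$ at $s=\frac d2$, and the constant ambiguity of $h^{d/2}$) disappear automatically because the endpoint kernel $\ln|x-y|$ is handled in physical space and the lower values of $s$ never see a low-frequency issue. Two spots in your sketch deserve tightening but are not gaps: the commutator term $[\Delta_j,a]h^s$ should be estimated through the covariance kernel (or, as in Proposition~\ref{ps:rf}, by noting the correction symbol lies in $S^{-2s-1}$) rather than via an $L^p$ commutator bound, since $h^s$ is not globally in $L^p$; and the dyadic characterization you use is that of $B^t_{p,p}$ rather than of $W^{t,p}$ for $p\neq2$, which is harmless only because the statement allows an arbitrary $\epsilon$-loss --- say so explicitly.
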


\begin{proof}
(i) If $s\in(\frac d2,\frac d2+1)$, it follows from Lemma \ref{lm:fBm} that $f$
has the same distribution as $aB^H$, where $B^H$ is a classical fractional
Brownian field on ${\mathbb{R}}^d$ with the Hurst parameter $H=s-\frac
d2\in(0,1)$. Note that
$B^H$ is $(H-\epsilon)$-H\"older continuous for any $\epsilon\in(0,H)$. Hence,
$f\in C^{0, \alpha}$ with $\alpha\in(0,H)=(0,s-\frac d2)$.

(ii) We first consider the case $s=\frac d2$ and hence $H=s-\frac d2=0$. By
Lemma \ref{lm:kernel}, the covariance operator $Q_f$ satisfies
\begin{eqnarray*}
\langle\varphi,Q_f\psi\rangle&=&\int_{{\mathbb{R}}^d}\int_{{\mathbb{R}}^d}K_f(x,
y)\varphi(x)\psi(y)dxdy\\
&=&\int_{{\mathbb{R}}^d}\int_{{\mathbb{R}}^d}a(x)a(y)K_{h^s}(x,
y)\varphi(x)\psi(y)dxdy\\
&=&\int_{{\mathbb{R}}^d}\int_{{\mathbb{R}}^d}C_2(s,
d)a(x)a(y)\ln|x-y|\varphi(x)\psi(y)dxdy
\end{eqnarray*}
for all $\varphi,\psi\in C_0^{\infty}\cap T_s$.
We may choose $K_f(x,y)=C_2(s,d)a(x)a(y)\ln|x-y|$ in this case. Following a 
similar proof to that of Theorem 2 in \cite{LPS08}, we consider the Bessel
potential operator $\mathcal{J}_{\epsilon}:=(I-\Delta)^{-\frac\epsilon2}$ with
$\epsilon>0$, where $I$ is the identify operator. It can be expressed through
the kernel in the form
$G_{\epsilon}(x,y)=C(\epsilon,d)|x-y|^{-d+\epsilon}+S(x,y)$ such that
\[
\mathcal{J}_{\epsilon}u=\int_{{\mathbb{R}}^d}G_{\epsilon}(x,y)u(y)dy,
\]
where $C(\epsilon,d)$ is a constant depending on $\epsilon$ and $d$, and
$S(x,y)$ is the more regular residual.
Note that $\mathcal{J}_\epsilon: W^{t,p}\to W^{t+\epsilon,p}$ is an isomorphism
for $t\in{\mathbb{R}}$ and $p\in(1,\infty)$ (see e.g. \cite[Section
3.3]{S70}). It then suffices to show that $\mathcal{J}_\epsilon f\in L^p$ a.s.
for any $\epsilon>0$ and $p\in(1,\infty)$. 
In fact, this result is obvious since the kernel of
$\mathcal{J}_{\epsilon}f$ is uniformly bounded:
\[
\int_{{\mathbb{R}}^d}\int_{{\mathbb{R}}^d}\left|a(x)a(y)\frac{\ln|x-y|}{|x-y|^{
d-\epsilon}}\right|dxdy\lesssim\int_0^R\left|\frac{\ln
r}{r^{d-\epsilon}}\right|r^{d-1}dr<\infty
\] 
for some positive number $R$ such that ${\mathcal{D}}\subset B(0,R)$.

If $s\in[0,\frac d2)$, the result can be obtained directly by noticing
$f=a(-\Delta)^{-\frac s2+\frac d4}\tilde f$ with $\tilde f:=(-\Delta)^{-\frac
d4}\dot{W}\in W^{-\epsilon,p}$ and the result obtained above for
$s=\frac{d}{2}$.
\end{proof}

\section{Direct scattering problem}
\label{sec:direct}

This section is to investigate the well-posedness and study the regularity of
the solution for the direct scattering problem. 

\subsection{Fundamental solution}

Let $\kappa^2=k^2+{\rm i} k\sigma$. A simple calculation yields that 
\[
 \Re[\kappa]=\kappa_{\rm
r}=\bigg(\frac{\sqrt{k^4+k^2\sigma^2}+k^2}2\bigg)^{\frac12 } , \quad
\Im[\kappa]=\kappa_{\rm
i}=\bigg(\frac{\sqrt{k^4+k^2\sigma^2}-k^2}2\bigg)^{\frac12},
\]
and
\begin{equation}\label{kri}
 \lim_{k\to\infty}\frac{\kappa_{\rm r}}{k}=1,\quad \lim_{k\to\infty}\kappa_{\rm
i}=\frac{\sigma}{2}. 
\end{equation}
Then the Helmholtz equation \eqref{eq:model} can be written as 
\begin{align}\label{eq:modelnew} 
\Delta u+\kappa^2u=f\quad\text{in}~\mathbb R^d.
\end{align}
The Helmholtz equation \eqref{eq:modelnew} with a complex-valued wavenumber
has the fundamental solution
\[
\Phi_\kappa(x,y)=
\begin{cases}
\frac{\rm i}4H_0^{(1)}(\kappa|x-y|), &\quad d=2,\\
\frac1{4\pi}\frac{e^{{\rm i}\kappa|x-y|}}{|x-y|}, &\quad d=3,
\end{cases}
\]
where $H_0^{(1)}$ is the Hankel function of the first kind with order 0.

\begin{lemma}\label{lm:Phik}
For any given $x\in{\mathbb{R}}^d$, it holds that 
$\Phi_\kappa(x,\cdot)\in W_{\rm loc}^{1,p},$ where $p\in(1,2)$ if $d=2$ and
$p\in(1,\frac32)$ if $d=3$.
\end{lemma}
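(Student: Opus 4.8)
The plan is to reduce the statement to the local integrability of $\Phi_\kappa(x,\cdot)$ and of its gradient near its only singularity, the point $y=x$. By translation invariance we may take $x=0$, and since $W^{1,p}_{\rm loc}$ only concerns precompact sets it suffices to fix a ball $B(0,R)$ and show $\Phi_\kappa(0,\cdot)\in W^{1,p}(B(0,R))$. Away from the origin $\Phi_\kappa(0,\cdot)$ solves the homogeneous Helmholtz equation and is therefore $C^\infty$ there, so everything reduces to controlling the behaviour in a small ball $B(0,\delta)$, together with a routine verification that the pointwise gradient is the weak one.

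First I would record the singular behaviour as $y\to0$. For $d=3$ the explicit formula gives directly $|\Phi_\kappa(0,y)|\lesssim|y|^{-1}$ and $|\nabla_y\Phi_\kappa(0,y)|\lesssim|y|^{-2}$. For $d=2$ I would invoke the classical small-argument expansions of the Hankel functions, $H_0^{(1)}(z)=\tfrac{2\mathrm i}{\pi}\ln z+O(1)$ and $(H_0^{(1)})'(z)=-H_1^{(1)}(z)=\tfrac{2\mathrm i}{\pi z}+O(1)$ as $z\to0$. Using $\Phi_\kappa(0,y)=\tfrac{\mathrm i}{4}H_0^{(1)}(\kappa|y|)$ and the chain rule $\nabla_y\Phi_\kappa(0,y)=\tfrac{\mathrm i\kappa}{4}(H_0^{(1)})'(\kappa|y|)\,\tfrac{y}{|y|}$, this yields $|\Phi_\kappa(0,y)|\lesssim 1+\bigl|\ln|y|\bigr|$ and $|\nabla_y\Phi_\kappa(0,y)|\lesssim|y|^{-1}$.

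Next I would read off the admissible $p$ from the elementary facts that $\int_{B(0,R)}|y|^{-a}\,dy$ is finite precisely when $a<d$, and $\int_{B(0,R)}\bigl|\ln|y|\bigr|^a\,dy<\infty$ for every $a>0$. In $d=2$ the function $\Phi_\kappa(0,\cdot)$ then lies in every $L^p(B(0,R))$, while its gradient bound $|y|^{-1}$ lies there exactly when $p<2$. In $d=3$ the function bound $|y|^{-1}$ lies in $L^p$ for $p<3$ and the gradient bound $|y|^{-2}$ lies in $L^p$ exactly when $2p<3$, i.e. $p<3/2$. Hence the gradient is the binding constraint and produces precisely the stated ranges. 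It remains to check that the classical gradient $g:=\nabla_y\Phi_\kappa(0,\cdot)$, which by the above is in $L^1_{\rm loc}$, is the distributional gradient: for $\varphi\in C_0^\infty$ one integrates by parts over $\{|y|>\varepsilon\}$; the interior terms converge since $\Phi_\kappa(0,\cdot),g\in L^1_{\rm loc}$, and the boundary integral over $\partial B(0,\varepsilon)$ is $O(\varepsilon|\ln\varepsilon|)$ when $d=2$ and $O(\varepsilon)$ when $d=3$, since the surface measure $\varepsilon^{d-1}$ dominates the growth of $\Phi_\kappa$; letting $\varepsilon\to0$ identifies $g$ as the weak gradient.

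I do not expect a serious obstacle: the core of the argument is bookkeeping of singularity orders against the Sobolev integrability thresholds. The one step that must genuinely be carried out rather than asserted is the last one — that no Dirac-type contribution concentrated at $y=x$ enters the weak derivative — but because the singular set is a single point (codimension $d\ge2$) and both $\Phi_\kappa(x,\cdot)$ and its pointwise gradient are locally integrable, the cutoff/integration-by-parts argument above closes this gap at once.
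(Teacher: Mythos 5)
Your proposal is correct and follows essentially the same route as the paper: both arguments come down to the small-argument asymptotics $H_0^{(1)}(z)\sim\frac{2{\rm i}}{\pi}\ln z$ and $H_1^{(1)}(z)\sim-\frac{2{\rm i}}{\pi z}$ in two dimensions, the explicit kernel $\frac{e^{{\rm i}\kappa|x-y|}}{|x-y|}$ in three dimensions, and the radial power counting that makes the gradient the binding constraint, yielding $p<2$ and $p<\frac32$. The only differences are cosmetic: the paper controls the complex-argument Hankel functions on the whole bounded domain via the comparison bound $|H_\nu^{(1)}(z)|\le e^{-\Im[z](1-\Theta^2/|z|^2)^{1/2}}|H_\nu^{(1)}(\Theta)|$ of \cite{CL05}, whereas you localize near the singularity and use smoothness elsewhere, and you additionally spell out the identification of the pointwise with the distributional gradient, which the paper leaves implicit.
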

\begin{proof}
Let $D\subset{\mathbb{R}}^d$ be any bounded domain. Denote $r^*:=\sup_{y\in
D}|x-y|$, then $D\subset B_{r^*}(x)$.

For $d=2$, we have $\Phi_\kappa(x,y)=\frac{\rm i}4H_0^{(1)}(\kappa|x-y|).$ It
suffices to show that $H_0^{(1)}(\kappa|x-\cdot|)\in L^{p}(D)$ and
$D_y^{\alpha}H_0^{(1)}(\kappa|x-\cdot|)\in L^p(D)$ with $0<|\alpha|\le1$.
Note that
\[
\left|H_\nu^{(1)}(z)\right|\le
e^{-\Im[z]\left(1-\frac{\Theta^2}{|z|^2}\right)^\frac12}\left|H_\nu^{(1)}
(\Theta)\right|
\]
for any $\nu\in{\mathbb{R}}$ and any real number $\Theta$ satisfying
$0<\Theta\le|z|$ (cf. \cite[Lemma 2.2]{CL05}). By choosing
$z=\kappa|x-y|$ and $\Theta=\Re(z)=\kappa_{\rm r}|x-y|$, on one hand, we have
\begin{align*}
\int_{D}\left|H_0^{(1)}(\kappa|x-y|)\right|^pdy\le&\int_De^{-p\frac{\kappa_{\rm
i}^2}{|\kappa|}|x-y|}\left|H_0^{(1)}(\kappa_{\rm r}|x-y|)\right|^pdy\\
\lesssim&\int_0^{r^*}e^{-p\frac{\kappa_{\rm i}^2}{|\kappa|}r}\left|H_0^{(1)}(
\kappa_{\rm r}r)\right|^prdr.
\end{align*}
For the above integral, since
$H_0^{(1)}(\kappa_{\rm r}r)\sim\frac{2{\rm i}}{\pi}\ln(\kappa_{\rm r}r)$
as $r\to0$ (cf. \cite[eq. (9.1.8)]{AS92}), we only need to consider the
integral
\[
\int_0^{r^*}e^{-p\frac{\kappa_{\rm i}^2}{|\kappa|}r}|\ln(\kappa_{\rm r}
r)|^prdr<\infty,
\]
which leads to $H_0^{(1)}(\kappa|x-\cdot|)\in L^{p}(D)$.
On the other hand, we have 
\[
\partial_{y_i}H_0^{(1)}(\kappa|x-y|)=\kappa
H_0^{(1)'}(\kappa|x-y|)\frac{y_i-x_i}{|x-y|}=-\kappa
H_1^{(1)}(\kappa|x-y|)\frac{y_i-x_i}{|x-y|}, \quad i=1,2. 
\]
Hence
\begin{align*}
\int_D\left|\partial_{y_i}H_0^{(1)}(\kappa|x-y|)\right|^pdy=&
|\kappa|^p\int_{D}\left|H_1^{(1)}(\kappa|x-y|)\right|^p\left|\frac{y_i-x_i}{
|x-y|}\right|^pdy\\
\lesssim&\int_De^{-p\frac{\kappa_{\rm i}^2}{|\kappa|}|x-y|}\left|H_1^{(1)}(
\kappa_{\rm r}|x-y|)\right|^pdy\\
\lesssim&\int_0^{r^*}e^{-p\frac{\kappa_{\rm
i}^2}{|\kappa|}r}\left|H_1^{(1)}(\kappa_{\rm r}r)\right|^prdr,
\end{align*}
where $H_1^{(1)}$ is the Hankel function of the first kind with order 1 and it
has the asymptotic expansion
$H_1^{(1)}(\kappa_{\rm r}r)\sim\frac{2{\rm i}}\pi\frac1{\kappa_{\rm r}r}$
as $r\to0$ (cf. \cite[eq. (9.1.9)]{AS92}). Since
\[
\int_0^{r^*}e^{-p\frac{\kappa_{\rm i}^2}{|\kappa|}r}\frac1{r^p}rdr<\infty,\quad
p\in (1, 2), 
\]
we obtain that $D^{\alpha}_yH_0^{(1)}(\kappa|x-\cdot|)\in L^p(D)$.

For $d=3$, we have 
$\Phi_{\kappa}(x,y)=\frac1{4\pi}\frac{e^{{\rm i}\kappa|x-y|}}{|x-y|}$ and
$\partial_{y_i}\Phi_\kappa(x,y)=\frac{e^{{\rm i}\kappa|x-y|}(y_i-x_i)}{
4\pi|x-y|^3}\left({\rm i}\kappa|x-y|-1\right)$, $i=1,2,3$. Noting for
$p\in(1,\frac32)$ that 
\[
\int_D\left|\frac{e^{{\rm i}\kappa|x-y|}}{|x-y|}\right|^pdy\lesssim\int_0^{
r^*}\frac1{r^p}r^2dr<\infty
\]
and
\[
\int_D\left|\frac{e^{{\rm i}\kappa|x-y|}(y_i-x_i)}{|x-y|^3}
\right|^pdy\lesssim\int_0^{r^*}\frac1{r^{2p}}r^2dr<\infty,
\]
we complete the proof.
\end{proof}

\subsection{Well-posedness and regularity}

Using the fundamental solution $\Phi_\kappa$, we define a volume potential
\begin{equation*}%\label{eq:Vk}
(V_\kappa f)(x):=-\int_{\mathbb{R}^d}\Phi_\kappa(x,y)f(y)dy.
\end{equation*}
The mollifier $V_\kappa$ has the following property. The proof can be found in
\cite{LHL,LPS08}.  

\begin{lemma}\label{lm:bv}
Let $\mathcal{O}$ and $\mathcal{U}$ be two bounded domains in $\mathbb{R}^d$.
The operator $V_\kappa:H_0^{-\beta}({\mathcal{O}})\to
H^\beta({\mathcal{U}})$ is bounded for $\beta\in\left(0,2-\frac d2\right]$.
\end{lemma}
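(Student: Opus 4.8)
The plan is to reduce the statement to a mapping property of the Bessel potential operator together with the local regularity of the fundamental solution $\Phi_\kappa$ established in Lemma \ref{lm:Phik}. First I would recall that, since $\mathcal O$ and $\mathcal U$ are bounded, it suffices to prove the boundedness of $V_\kappa$ as an operator into $H^\beta(\mathcal U)$ when the input is supported in $\mathcal O$; by duality and density it is enough to estimate $\langle V_\kappa f,\varphi\rangle$ for $f\in H_0^{-\beta}(\mathcal O)$ and $\varphi\in C_0^\infty(\mathcal U)$. Writing the volume potential as $(V_\kappa f)(x)=-\int_{\mathbb R^d}\Phi_\kappa(x,y)f(y)\,dy$, the kernel $\Phi_\kappa(x,y)$ differs from the fundamental solution $\Phi_0$ of the Laplacian (which has kernel $c_d|x-y|^{-d+2}$ for $d=3$, and $c_2\ln|x-y|$ for $d=2$) by a term that is smoother near the diagonal; the singular part is handled by the classical fact that convolution with $|x-y|^{-d+2}$ maps $H^{t}\to H^{t+2}$ locally, i.e. it behaves like $(-\Delta)^{-1}$.

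The key steps, in order, are: (1) decompose $\Phi_\kappa(x,y)=\Phi_0(x-y)+R_\kappa(x,y)$, where $R_\kappa$ is continuous (indeed $C^\infty$ off the diagonal and with at worst a bounded or $\log$-type singularity that is strictly less singular than $\Phi_0$, as can be read off the asymptotics of $H_0^{(1)}$ used in Lemma \ref{lm:Phik}); (2) for the principal part, use that $(-\Delta)^{-1}$, realized as convolution against $\Phi_0$, composed with the cutoffs coming from the bounded domains, maps $H^{-\beta}\to H^{2-\beta}\subset H^\beta$ precisely when $2-\beta\ge\beta$, i.e. $\beta\le 1$, and is continuous in the scale $H^{s}\to H^{s+2}$ for all real $s$; the restriction $\beta\le 2-\tfrac d2$ guarantees both that $2-\beta\ge \beta$ and, more importantly via Sobolev embedding, that the target exponent $\beta$ is below the threshold at which the kernel $\Phi_\kappa(x,\cdot)$ fails to lie in the relevant local Sobolev space — this is exactly the content of Lemma \ref{lm:Phik}, which gives $\Phi_\kappa(x,\cdot)\in W^{1,p}_{\rm loc}$ for the stated range of $p$, hence by Sobolev embedding $\Phi_\kappa(x,\cdot)$ lies locally in $H^{\beta}$ for $\beta<2-\tfrac d2$; (3) for the remainder $R_\kappa$, since it is bounded (and smoother), the associated integral operator is easily seen to map $H_0^{-\beta}(\mathcal O)$ boundedly into $H^{\beta}(\mathcal U)$ for the same range by testing against $C_0^\infty$ and using the smoothness of $R_\kappa$ in both variables on the compact sets $\overline{\mathcal U}\times\overline{\mathcal O}$; (4) combine (2) and (3) and invoke the isomorphism property of the Bessel potential $\mathcal J_\epsilon=(I-\Delta)^{-\epsilon/2}$ between $W^{t,p}$ and $W^{t+\epsilon,p}$ to absorb the lower-order terms and conclude boundedness into $H^\beta(\mathcal U)$.

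The main obstacle I expect is the borderline behavior at $\beta=2-\tfrac d2$, where the target smoothness meets exactly the regularity ceiling of the kernel: for $d=3$ this is $\beta=\tfrac12$ and for $d=2$ it is $\beta=1$, and one must check that the endpoint is actually attained rather than only approached. Handling this requires being slightly careful about how the logarithmic (for $d=2$) or $|x-y|^{-1}$ (for $d=3$) singularities interact with the fractional smoothing: one cannot simply quote an $L^p$ estimate but must use the full $H^{s}\to H^{s+2}$ mapping property of the Newtonian potential together with the precise Sobolev embedding $H^{2-\beta}\hookrightarrow H^{\beta}$, and verify that the remainder term $R_\kappa$ does not degrade this at the endpoint. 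Once the decomposition and the Bessel-potential bookkeeping are set up, the remaining computations are routine and parallel those in \cite{LPS08}.
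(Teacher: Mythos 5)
For this lemma the paper itself gives no argument at all: it simply cites \cite{LHL,LPS08}, where the estimate is proved (in sharper, $k$-uniform form) by direct analysis of the Fourier multiplier $(\kappa^2-|\xi|^2)^{-1}$. Your route --- split $\Phi_\kappa=\Phi_0+R_\kappa$, use that the (cut-off) Newtonian potential is smoothing of order $2$, so $H_0^{-\beta}(\mathcal{O})\to H^{2-\beta}\hookrightarrow H^\beta(\mathcal{U})$ once $2-\beta\ge\beta$, and treat $R_\kappa$ as a better term --- is a legitimate, more self-contained alternative, and it is enough for the way Lemma \ref{lm:bv} is actually used in Theorem \ref{tm:solution}, namely for a fixed $\kappa$ with no uniformity in $k$ required. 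Note also that within this framework the endpoint $\beta=2-\frac d2$ causes no difficulty whatsoever: the only constraint the argument produces is $\beta\le 1$, which already contains $(0,2-\frac d2]$ for $d=2,3$, so the ``main obstacle'' you anticipate is a red herring. Relatedly, your appeal to Lemma \ref{lm:Phik} plus Sobolev embedding is not the operative mechanism: knowing $\Phi_\kappa(x,\cdot)\in H^\beta_{\rm loc}$ for $\beta<2-\frac d2$ only yields a pointwise bound $|V_\kappa f(x)|\lesssim\|f\|_{H^{-\beta}}$, i.e. $L^\infty$ control of the output, not membership of $V_\kappa f$ in $H^\beta(\mathcal{U})$, and it misses the endpoint besides.

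The genuine gap is in your step (3). Nothing in the lemma assumes $\mathcal{O}$ and $\mathcal{U}$ are disjoint, and in the paper's application they overlap: in the proof of Theorem \ref{tm:solution}, $\mathcal{U}$ is the support of an arbitrary cutoff $\phi\in C_0^\infty$ and may well intersect $\mathcal{D}$. Hence $R_\kappa$ is \emph{not} smooth on $\overline{\mathcal{U}}\times\overline{\mathcal{O}}$; it is only continuous across the diagonal, with limited smoothness there (for $d=3$, $R_\kappa(x,y)=\frac{e^{{\rm i}\kappa|x-y|}-1}{4\pi|x-y|}$ is Lipschitz with second derivatives of size $|x-y|^{-1}$; for $d=2$ one has a constant plus $|x-y|^2\ln|x-y|$-type terms). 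So ``testing against $C_0^\infty$ and using the smoothness of $R_\kappa$'' does not go through as written: to send $H_0^{-\beta}(\mathcal{O})$ into $H^\beta(\mathcal{U})$ the remainder operator must gain $2\beta$ derivatives, and a merely bounded kernel does not obviously do that. The step can be repaired --- e.g. check that $\nabla_x R_\kappa(x,\cdot)$ lies in $H^1_{\rm loc}$ uniformly in $x$ (the singular second derivatives are locally square integrable in both dimensions), which gives $\int R_\kappa(\cdot,y)f(y)\,dy\in H^1(\mathcal{U})\subset H^\beta(\mathcal{U})$ for $f\in H_0^{-\beta}(\mathcal{O})$, $\beta\le1$; or observe that the difference of the multipliers $(|\xi|^2-\kappa^2)^{-1}-|\xi|^{-2}$ decays like $|\xi|^{-4}$ at high frequency --- but this quantitative control of the diagonal singularity is precisely where the work lies, and your proposal currently replaces it by an assumption (smoothness on the product of the closures) that is false in the relevant configuration.
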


\begin{theorem}\label{tm:solution}
Let $p\in (\frac{d}2,2]$,
$s\in \big(d (\frac1p+\frac12 )-2,\frac d2\big]$, and $H=s-\frac d2\in (\frac
dp-2,0]$. Assume that $f\in W_{\rm comp}^{H-\epsilon,p}$ for
any $\epsilon>0$. Then the  scattering problem \eqref{eq:model}
admits a unique solution $u\in W_{\rm loc}^{-H+\epsilon,q}$ a.s. in the
distribution sense with $q$ satisfying
$\frac1p+\frac1q=1$. Moreover, the solution is given by 
\[
u(x;k)=-\int_{{\mathbb{R}^d}}\Phi_\kappa(x,y)f(y)dy.
\]
\end{theorem}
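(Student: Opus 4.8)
The plan is to realize the solution explicitly as the volume potential $u:=V_\kappa f$, i.e.\ $u(x;k)=-\int_{\mathbb{R}^d}\Phi_\kappa(x,y)f(y)\,dy$, to read its regularity off from Lemma~\ref{lm:bv} together with two Sobolev embeddings, and to prove uniqueness among outgoing solutions. Everything is carried out pathwise: by Lemma~\ref{lm:regu}, $f(\omega)\in W_{\rm comp}^{H-\epsilon,p}(\mathcal{D})$ for a.e.\ $\omega\in\Omega$, so fix such an $\omega$ and write $f=f(\omega)$. Since $H-\epsilon<0$, the defining integral has to be read, for each fixed $x$, as the dual pairing $-\langle f,\chi\,\Phi_\kappa(x,\cdot)\rangle$ with $\chi\in C_0^\infty$ equal to $1$ on a neighbourhood of $\mathcal{D}$; this is legitimate because $\Phi_\kappa(x,\cdot)\in W_{\rm loc}^{-H+\epsilon,q}=(W^{H-\epsilon,p})_{\rm loc}^{*}$. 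The last membership comes from Lemma~\ref{lm:Phik}, which puts $\Phi_\kappa(x,\cdot)$ in $W_{\rm loc}^{1,p_0}$ for $p_0$ arbitrarily close to $2$ when $d=2$ and to $\tfrac32$ when $d=3$, followed by a Sobolev embedding into $W_{\rm loc}^{-H+\epsilon,q}$; the index inequality to be checked, $-H+\epsilon\le 1-\tfrac d{p_0}+\tfrac dq$, is exactly where the hypotheses $p>d/2$ and $s>d(\tfrac1p+\tfrac12)-2$ enter. Granting this, the identity $(\Delta+\kappa^2)\Phi_\kappa(x,\cdot)=-\delta_x$ (i.e.\ $\Phi_\kappa$ is the outgoing fundamental solution) together with Fubini for the distributional pairing shows that $u=V_\kappa f$ solves $\Delta u+\kappa^2u=f$ in $\mathcal{S}'$.

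For the regularity I would bracket the $L^2$ estimate of Lemma~\ref{lm:bv} between two Sobolev embeddings. Because $p\le 2$ and $f$ has compact support, the sharp Sobolev embedding along the scaling line gives $f\in H_{\rm comp}^{-\beta}$ with $\beta:=-H+\epsilon+\tfrac dp-\tfrac d2$; the constraints $p>d/2$, $p\le 2$ and the strict inequality $H>\tfrac dp-2$ ensure $0<\beta\le 2-\tfrac d2$ once $\epsilon$ is small, so Lemma~\ref{lm:bv} yields $u=V_\kappa f\in H_{\rm loc}^{\beta}$. A second Sobolev embedding, from $H^{\beta}=W^{\beta,2}$ into $W^{-H+\epsilon,q}$ on precompact subsets, is then available since $q\ge 2$ and, by the very definition of $\beta$, $\beta-\tfrac d2=(-H+\epsilon)-\tfrac dq$. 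Hence $u\in W_{\rm loc}^{-H+\epsilon,q}$; since $\omega$ was arbitrary off a null set, this holds a.s.

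For uniqueness, two outgoing solutions $u_1,u_2\in W_{\rm loc}^{-H+\epsilon,q}$ have a difference $w:=u_1-u_2$ solving $\Delta w+\kappa^2w=0$ in $\mathcal{S}'$; by constant-coefficient elliptic regularity $w$ is real-analytic, and it satisfies the Sommerfeld radiation condition, so the standard Helmholtz uniqueness argument (Rellich's lemma and unique continuation) gives $w\equiv0$ — and when $\sigma>0$ this is immediate, since $\kappa^2\notin[0,\infty)$ forces the Fourier transform of a tempered solution to vanish. Thus the solution is unique and coincides with the stated potential.

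I expect the regularity step to be the main obstacle: one has to isolate the diagonal (Newtonian/Riesz-type) singularity of $\Phi_\kappa$, which is responsible for the smoothing, from its oscillatory far field, confirm that Lemma~\ref{lm:bv} genuinely applies with the particular $\beta$ above over the whole admissible range of $(p,s)$, and thread the two Sobolev embeddings without conceding more than an $\epsilon$ — a constraint that for $d=2$ is borderline and explains why $W_{\rm loc}^{-H+\epsilon,q}$, rather than $W_{\rm loc}^{-H,q}$, is what one can assert.
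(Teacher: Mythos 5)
Your proposal is correct and follows essentially the same route as the paper: the solution is realized as the volume potential, verified to solve the equation distributionally via the fundamental-solution identity, and its regularity is obtained by embedding $W^{H-\epsilon,p}_{\rm comp}\hookrightarrow H_0^{-\beta}$ with the same exponent $\beta=-H+\epsilon+\frac dp-\frac d2$, applying Lemma \ref{lm:bv}, and then embedding $H^{\beta}$ into $W^{-H+\epsilon,q}$ on bounded sets. Your added details (the pointwise duality interpretation via Lemma \ref{lm:Phik} and the explicit uniqueness argument, which the paper simply attributes to the deterministic theory) are compatible refinements rather than a different method.
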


\begin{proof}
We only need to show the existence of the solution since the uniqueness follows
directly from the deterministic case. Let ${\mathcal{D}}$ be a bounded domain
such that supp$(f)\subset{\mathcal{D}}$.
Then $f\in W^{H-\epsilon,p}({\mathcal{D}})$. For any $x\in{\mathbb{R}}^d$,
define the volume potential 
\[
u_*(x;k):=-\int_{\mathcal{D}}\Phi_\kappa(x,y)f(y)dy=-\int_{{\mathbb{R}}^d}
\Phi_\kappa(x,y)f(y)dy.
\]

First we show that $u_*$ is a solution of \eqref{eq:model} in the distribution
sense. In fact, we have for any $v\in C_0^\infty$ that 
\begin{eqnarray*}
&&\langle\Delta u_*+\kappa^2u_*,v\rangle=-\langle\nabla u_*,\nabla
v\rangle+\kappa^2\langle u_*,v\rangle\\
&=&\int_{{\mathbb{R}}^d}\nabla_x \Big[\int_{{\mathbb{R}}^d}\Phi_\kappa(x,
y)f(y)dy \Big]\nabla
v(x)dx -\kappa^2\int_{{\mathbb{R}}^d}\Big[\int_{{\mathbb{R}}^d}\Phi_\kappa(x,
y)f(y)dy\Big] v(x)dx\\
&=&-\int_{{\mathbb{R}}^d}\int_{{\mathbb{R}}^d}\Delta_x\Phi_\kappa(x,
y)v(x)f(y)dxdy -\kappa^2\int_{{\mathbb{R}}^d}\Big[\int_{{\mathbb{R}}^d}
\Phi_\kappa(x,y)f(y)dy\Big] v(x)dx\\
&=&\int_{{\mathbb{R}}^d}\int_{{\mathbb{R}}^d}\left(\kappa^2\Phi_\kappa(x,
y)+\delta(x-y)\right)v(x)f(y)dxdy-\kappa^2\int_{{\mathbb{R}}^d}\Big[\int_{{
\mathbb{R}}^d}\Phi_\kappa(x,y)f(y)dy\Big] v(x)dx\\
&=&\langle f,v\rangle.
\end{eqnarray*}

It then suffices to show that $u_*\in W^{-H+\epsilon,q}_{\rm loc}$, which is
equivalent to show that $\phi u_*\in W^{-H+\epsilon,q}$ for any $\phi\in
C_0^{\infty}$ with support $\mathcal{U}\subset\mathbb{R}^d$.
Define a weighted potential
\[
(\tilde V_\kappa f)(x):=-\phi(x)\int_{{\mathbb{R}}^d}
\Phi_\kappa(x,y)f(y)dy,\quad x\in\mathcal{U}.
\]
By Lemma \ref{lm:bv}, the operator $\tilde
V_\kappa:H^{-\beta}_0({\mathcal{D}})\to H^{\beta}(\mathcal{U})$ is bounded
for $\beta\in\left(0,2-\frac d2\right]$. Noting the Sobolev embedding
theorem with fractional index that $W^{r,p}$ is embedded continuously into
$W^{t,q}$ with $r\ge
t$ and $\frac1q=\frac1p-\frac{r-t}{d}$, we get that
$W^{H-\epsilon,p}_0({\mathcal{D}})\hookrightarrow H^{-\beta}_0({\mathcal{D}})$
with $-H+\epsilon\le\beta$ and 
$
-H+\epsilon=d(\frac12-\frac1p)+\beta\in(0,2-\frac dp],
$ 
and $H^{\beta}(\mathcal{U})\hookrightarrow W^{-H+\epsilon,q}(\mathcal{U})$
with $\frac1p+\frac1q=1$. Consequently, $\tilde V_\kappa:
W^{H-\epsilon,p}_0({\mathcal{D}})\to
W^{-H+\epsilon,q}(\mathcal{U})$ is bounded, which shows that $
\phi u_*=\tilde V_\kappa f\in W^{-H+\epsilon,q}$ and completes the proof.
\end{proof}

\begin{remark}
It follows from Lemma \ref{lm:regu} that the random source is a continuous
function for $s\in(\frac d2,\frac d2+1)$. The well-posedness of the scattering
problem \eqref{eq:model} is well known since the source $f$ is compactly
supported and regular enough \cite{CK13}.
\end{remark}

\section{Inverse scattering problem}
\label{sec:inverse}

This section addresses the inverse scattering problem. The goal is to determine
the strength $\mu$ of the random source $f$. We discuss the two- and
three-dimensional cases, separately. 

\subsection{Two-dimensional case} 

First we consider $d=2$ in which $s\in[0,\frac
d2+1)=[0,2)$. Recall that the Hankel function has the following asymptotic
expansion \cite{AS92}:
\begin{equation}\label{eq:Hn}
H_0^{(1)}(z)\sim\sum_{j=0}^{\infty}a_{j}z^{-\left(j+\frac12\right)}e^{{\rm i
} z},\quad z\in\mathbb{C},~|z|\to\infty,
\end{equation}
where $a_{0}=\sqrt{\frac2\pi}e^{-\frac{{\rm i}\pi}4}$ and
$a_{j}=\sqrt{\frac2\pi}\left(\frac{{\rm i}}8\right)^j \Big(\prod\limits_{l=1
}^j(2l-1)^2/j!\Big)e^{-\frac{\rm i\pi}4}, j\ge1.$
Denoting 
\[
H_{0,N}^{(1)}(z):=\sum_{j=0}^Na_{j}z^{-\left(j+\frac12\right)}e^{{\rm i} z},
\quad 
\Phi_\kappa^N(x,y):=\frac{{\rm i}}4H_{0,N}^{(1)}(\kappa|x-y|),
\]
we have
\begin{align*}
\Phi_\kappa(x,y)=\Phi_\kappa^N(x,y)+O\big(|\kappa|x-y||^{
-(N+\frac32)}\big),\quad N\in\mathbb{N},
\end{align*}
as $|\kappa|x-y||\to\infty$ due to $\kappa_{\rm i}>0$.
Based on the truncated fundamental solution $\Phi_\kappa^2(x,y)$ by choosing
$N=2$, we consider the approximate solution 
\begin{eqnarray*}
&&u^2(x;k)=-\int_{{\mathbb{R}}^2}\Phi_\kappa^2(x,y)f(y)dy
=-\frac{{\rm i}
a_{0}}4\int_{{\mathbb{R}}^2}(\kappa|x-y|)^{-\frac12}e^{{\rm i}\kappa|x-y|}
f(y)dy\\
&&\quad -\frac{{\rm i}
a_{1}}4\int_{{\mathbb{R}}^2}(\kappa|x-y|)^{-\frac32}e^{{\rm i}\kappa|x-y|}
f(y)dy-\frac{{\rm i}
a_{2}}4\int_{{\mathbb{R}}^2}(\kappa|x-y|)^{-\frac52}e^{{\rm i}\kappa|x-y|}
f(y)dy,\quad x\in{\mathbb{R}}^2.
\end{eqnarray*}

Let $\mathcal{U}\subset\mathbb R^2$ be a bounded domain satisfying 
${\rm dist}({\mathcal{U}},{\mathcal{D}})=r_0>0$. First we show that the strength
$\mu$ of the source $f$ given in Assumption
\ref{as:f} can be reconstructed uniquely by the variance of the solution $u$ on
$\mathcal U$. 

\begin{proposition}\label{prop:u1}
Let $k\ge1$ and the assumptions in Theorem \ref{tm:solution} hold. Then the
following estimate holds:
\[
{\mathbb{E}}|u^2(x;k)|^2=T_\kappa(x)|\kappa|^{-1}\kappa_{\rm r}^{-2s}+O\left(
\kappa_{\rm r}^{-2s-2}\right),\quad x\in{\mathcal{U}},
\]
where
\[
T_{\kappa}(x):=\frac{1}{2^3\pi}\int_{{\mathbb{R}}^2}\frac{e^{-2\kappa_{\rm
i}|x-y|}}{|x-y|}\mu\left(y\right)dy.
\]
\end{proposition}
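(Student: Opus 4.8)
The plan is to split the approximate solution into the three pieces coming from the terms of $\Phi_\kappa^2$, to isolate the single dominant bilinear contribution, and then to reduce it to an oscillatory integral against the covariance kernel $K_f$ whose behaviour near the diagonal is, by \eqref{eq:Kf} and Lemma~\ref{lm:kernel}, a Riesz (or logarithmic) kernel. Write $u^2=u_0^2+u_1^2+u_2^2$, where $u_j^2(x;k)=-\frac{{\rm i}a_j}4\kappa^{-(j+\frac12)}\int_{{\mathbb{R}}^2}|x-y|^{-(j+\frac12)}e^{{\rm i}\kappa|x-y|}f(y)dy$ is the contribution of the $j$-th term of \eqref{eq:Hn}; these pairings are meaningful since ${\rm dist}({\mathcal{U}},{\mathcal{D}})=r_0>0$, so $\Phi_\kappa^2(x,\cdot)$ is smooth on ${\mathcal{D}}\supset{\rm supp}(f)$. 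Then
\[
{\mathbb{E}}|u^2(x;k)|^2=\sum_{j,l=0}^{2}\frac{a_j\overline{a_l}}{16}\,\kappa^{-(j+\frac12)}\bar\kappa^{-(l+\frac12)}\int_{{\mathbb{R}}^2}\int_{{\mathbb{R}}^2}\frac{K_f(y,z)\,e^{{\rm i}\kappa|x-y|-{\rm i}\bar\kappa|x-z|}}{|x-y|^{j+\frac12}|x-z|^{l+\frac12}}\,dydz,
\]
the prefactor of the $(j,l)$ term having modulus $|\kappa|^{-(j+l+1)}$. I would first show, by the argument below for $(j,l)=(0,0)$, that each inner double integral is $O(\kappa_{\rm r}^{-2s})$ uniformly for $x\in{\mathcal{U}}$; since $|\kappa|\ge\kappa_{\rm r}$, this makes every term with $(j,l)\ne(0,0)$ of size $O(|\kappa|^{-2}\kappa_{\rm r}^{-2s})=O(\kappa_{\rm r}^{-2s-2})$, so it remains to analyze ${\mathbb{E}}|u_0^2|^2=\frac{|a_0|^2}{16|\kappa|}\iint\frac{K_f(y,z)}{|x-y|^{\frac12}|x-z|^{\frac12}}e^{{\rm i}\kappa|x-y|-{\rm i}\bar\kappa|x-z|}dydz$ with $|a_0|^2=\frac2\pi$.

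Next I would replace $K_f$ near its diagonal by its principal part. By Assumption~\ref{as:f}, the representation \eqref{eq:Kf}, and Lemma~\ref{lm:kernel} applied to $(-\Delta)^{-s/2}\dot W$ (whose covariance symbol is exactly $|\xi|^{-2s}$), one has $K_f(y,z)=C_1(s,2)\mu(y)|y-z|^{2s-2}+R(y,z)$, where $R$ is one order smoother at the diagonal, with the logarithmic kernel of Lemma~\ref{lm:kernel}(ii) replacing the power when $s=1$ and $K_f=\mu\,\delta$ when $s=0$. The $R$-term will contribute $O(|\kappa|^{-1}\kappa_{\rm r}^{-2s-1})=O(\kappa_{\rm r}^{-2s-2})$ by the same estimate used for the principal part. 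For the principal part I would carry out the $z$-integral first with $y$ fixed: because $|x-z|\ge r_0$, the amplitude is smooth and bounded on ${\mathcal{D}}$, and the oscillatory factor $e^{-{\rm i}\kappa_{\rm r}|x-z|}$ (the remaining factor $e^{-\kappa_{\rm i}|x-z|}$ being smooth) has phase gradient of modulus $\kappa_{\rm r}$ and no critical point, so the region $\{|y-z|\ge\delta\}$ contributes $O(\kappa_{\rm r}^{-N})$ for all $N$ by nonstationary phase, and the asymptotics are entirely governed by the Riesz singularity at $z=y$. Rescaling $w=\kappa_{\rm r}(y-z)$, linearizing $|x-z|=|x-y|+\frac{x-y}{|x-y|}\cdot\frac{w}{\kappa_{\rm r}}+O(\kappa_{\rm r}^{-2})$, using $\bar\kappa/\kappa_{\rm r}\to1$ from \eqref{kri}, and invoking the Fourier reciprocity $\int_{{\mathbb{R}}^2}|w|^{2s-2}e^{-{\rm i}\omega\cdot w}dw=\tilde c_s|\omega|^{-2s}$ with $C_1(s,2)\tilde c_s=1$ — which is just Lemma~\ref{lm:kernel} read through Fourier inversion, with the analogous identity in the logarithmic case — I expect to obtain
\[
C_1(s,2)\int_{{\mathbb{R}}^2}\frac{|y-z|^{2s-2}\,e^{-{\rm i}\bar\kappa|x-z|}}{|x-z|^{\frac12}}\,dz=\kappa_{\rm r}^{-2s}\,\frac{e^{-{\rm i}\bar\kappa|x-y|}}{|x-y|^{\frac12}}\big(1+O(\kappa_{\rm r}^{-1})\big)
\]
uniformly in $y\in{\mathcal{D}}$ and $x\in{\mathcal{U}}$.

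Substituting the last identity back, the outer $y$-integral carries no oscillation because $e^{{\rm i}\kappa|x-y|}e^{-{\rm i}\bar\kappa|x-y|}=e^{{\rm i}(\kappa-\bar\kappa)|x-y|}=e^{-2\kappa_{\rm i}|x-y|}$, and $C_1(s,2)$ leaves no trace since $C_1(s,2)\tilde c_s=1$, leaving
\[
{\mathbb{E}}|u^2(x;k)|^2=\frac{|a_0|^2}{16}\,|\kappa|^{-1}\kappa_{\rm r}^{-2s}\int_{{\mathbb{R}}^2}\frac{\mu(y)\,e^{-2\kappa_{\rm i}|x-y|}}{|x-y|}\,dy+O\big(\kappa_{\rm r}^{-2s-2}\big).
\]
As $|a_0|^2/16=\frac1{2^3\pi}$, the main term is exactly $T_\kappa(x)|\kappa|^{-1}\kappa_{\rm r}^{-2s}$. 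The collected error is $O(\kappa_{\rm r}^{-2s-2})$: the bilinear terms with $(j,l)\ne(0,0)$ and the smoother remainder $R$ are both of this size, and the next order in the $z$-asymptotics gives $|\kappa|^{-1}\kappa_{\rm r}^{-2s-1}=O(\kappa_{\rm r}^{-2s-2})$, all uniformly in $x\in{\mathcal{U}}$ thanks to $r_0>0$.

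The main difficulty is the asymptotic analysis of the inner $z$-integral: its phase $|x-z|$ has no stationary point, so ordinary stationary phase does not apply, and the leading order is instead forced by the Riesz (or logarithmic) singularity of the amplitude on the diagonal. Making the rescaling rigorous — bounding the far region by nonstationary phase, justifying the passage to the limit through $\bar\kappa/\kappa_{\rm r}\to1$, keeping every bound uniform in $x\in{\mathcal{U}}$, and pinning the constant via the reciprocity $C_1(s,2)\tilde c_s=1$ relating the kernel constant of Lemma~\ref{lm:kernel} to the Fourier-transform constant — together with the separate bookkeeping for the logarithmic case $s=1$ and the white-noise case $s=0$, is where the real work lies.
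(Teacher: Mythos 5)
Your plan is correct in outline and lands on the right constant, but it follows a genuinely different route from the paper. The paper never extracts an explicit Riesz kernel: it keeps $K_f(y,z)\theta(x)$ as a conormal distribution written through its symbol via \eqref{eq:Kf}, changes variables by $\tau:(y,z,x)\mapsto(g,h,x)$ built from $\frac12(|x-y|\mp|x-z|)$ and their angular analogues, tracks the symbol under the diffeomorphism $\gamma=\eta\circ\tau^{-1}$ using H\"ormander's invariance results (Lemma 18.2.1 and Theorem 18.2.9 of \cite{H07}), and then the $g$-integration produces a delta function that evaluates the transformed principal symbol at $\xi=-2\kappa_{\rm r}e_1$; this yields the factor $\kappa_{\rm r}^{-2s}$ and, after a final change of variables in $h$, the weights $M^\kappa_{l_1,l_2}(x)$, with all sub-principal contributions disposed of in one stroke as symbols of order $-2s-1$. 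Your version replaces this machinery by a kernel-side argument, $K_f=C_1(s,2)\mu(y)|y-z|^{2s-2}+R$ plus a direct frequency-localization of the inner $z$-integral via the reciprocity between $C_1(s,2)|w|^{2s-2}$ and $|\xi|^{-2s}$. The paper's approach buys uniformity in $s\in[0,2)$ (no separate white-noise and logarithmic cases), compactly supported amplitudes throughout, and a clean treatment of remainders; yours buys elementarity, at the price of exactly the ``real work'' you flag.

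Three points need care if you execute it. First, $C_1(s,2)\mu(y)|y-z|^{2s-2}$ is not compactly supported in $z$ whereas $K_f$ is, so your displayed inner integral over ${\mathbb{R}}^2$ is not absolutely convergent when $\kappa_{\rm i}=0$ (the case $\sigma=0$ is allowed and is used later in the paper); the cutoff must be inserted before extending the integral, with the far region controlled by nonstationary phase. Second, for $s\ge1$ the principal symbol $|\xi|^{-2s}$ is not locally integrable near $\xi=0$, so the kernel of the principal part is only determined modulo a smooth kernel (whose contribution is $O(\kappa_{\rm r}^{-N})$); this is precisely where the logarithm at $s=1$ and the constant for $s\in(1,2)$ come from, and it has to be stated. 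Third, the assertion that $R$ contributes one extra power cannot be justified ``by the same estimate'' verbatim: $R$ is the kernel of a classical operator of order $-2s-1$, not a Riesz kernel, and an absolute bound on its singularity gives only $O(1)$; you must either rerun the frequency-localization on its symbol expansion (which is essentially the paper's symbol argument at one order lower) or otherwise exploit the oscillation quantitatively. None of these is fatal, but they are genuine steps, not bookkeeping.
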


\begin{proof}
For any $x\in{\mathcal{U}}$, we have from straightforward calculations that
\begin{eqnarray}\label{eq:u1square}
{\mathbb{E}}|u^2(x;k)|^2&=&\frac{|a_{0}|^2}{16|\kappa|}\int_{{\mathbb{R}}^2}
\int_{{\mathbb{R}}^2}\frac{e^{{\rm i}\kappa|x-y|-{\rm i}\bar{\kappa}
|x-z|}}{|x-y|^{\frac12}|x-z|^{\frac12}}{\mathbb{E}}[f(y)f(z)]dydz\nonumber\\
&&+\Re\left[\frac{a_{0}\bar{a}_1}{8|\kappa|\bar{\kappa}}\right]
\int_{{\mathbb{R}}^2}\int_{{\mathbb{R}}^2}\frac{e^{{\rm
i}\kappa|x-y|-{\rm
i}\bar{\kappa}|x-z|}}{|x-y|^{\frac12}|x-z|^{\frac32}}{\mathbb{E}}[f(y)f(z)]
dydz\nonumber\\
&&+\frac{|a_{1}|^2}{16|\kappa|^3}\int_{{\mathbb{R}}^2}\int_{{\mathbb{R}}^2}\frac
{e^{{\rm i}\kappa|x-y|-{\rm i}\bar{\kappa}|x-z|}}{|x-y|^{\frac32}
|x-z|^{\frac32}}{\mathbb{E}}[f(y)f(z)]dydz\nonumber\\
&&+\Re\left[\frac{a_{0}\bar{a}_2}{8|\kappa|\bar{\kappa}^2}\right]
\int_{{\mathbb{R}}^2}\int_{{\mathbb{R}}^2}\frac{e^{{\rm
i}\kappa|x-y|-{\rm i}\bar{\kappa}|x-z|}}{|x-y|^{\frac12}|x-z|^{\frac52}}{
\mathbb{E}}[
f(y)f(z)]dydz\nonumber\\
&&+\Re\left[\frac{a_{1}\bar{a}_2}{8|\kappa|^3\bar{\kappa}}\right]
\int_{{\mathbb{R}}^2}\int_{{\mathbb{R}}^2}\frac{e^{{\rm
i}\kappa|x-y|-{\rm i}\bar{\kappa}|x-z|}}{|x-y|^{\frac32}|x-z|^{\frac52}}{
\mathbb{E}}[
f(y)f(z)]dydz\nonumber\\
&&+\frac{|a_{2}|^2}{16|\kappa|^5}\int_{{\mathbb{R}}^2}\int_{{\mathbb{R}}^2}\frac
{e^{{\rm i}\kappa|x-y|-{\rm i}\bar{\kappa}|x-z|}}{|x-y|^{\frac52}
|x-z|^{\frac52}}{\mathbb{E}}[f(y)f(z)]dydz. 
\end{eqnarray}
To estimate all the above terms, it suffices to consider the integral
\[
I_{l_1,l_2}(x;k):=\int_{{\mathbb{R}}^2}\int_{{\mathbb{R}}^2}\frac{e^{{\rm i}
\kappa|x-y|-{\rm i}\overline{\kappa}|x-z|}}{|x-y|^{\frac12+l_1}|x-z|^{
\frac12+l_2}}K_f(y,z)\theta(x)dydz,\quad l_1,l_2\in\{0,1,2\},
\]
where $\theta\in C_0^{\infty}$ such that $\theta|_{{\mathcal{U}}}\equiv1$ and
supp$(\theta)\subset{\mathbb{R}}^2\backslash\overline{{\mathcal{D}}}$.
Define $C_1(y,z,x):=K_f(y,z)\theta(x)$ and $c_1(y,x,\xi):=c(y,\xi)\theta(x)$
with $c(y,\xi)$ being the symbol of the covariance operator $Q_f$ of the random
field $f$. Furthermore, $c_1\in S^{-2s}$ with $S^m$ being the space of symbols
of order $m$, $m\in{\mathbb{R}}$, has the principal symbol
\[
c_1^p(y,x,\xi)=\mu(y)\theta(x)|\xi|^{-2s}.
\]
Based on \eqref{eq:Kf}, we have
\begin{align*}
C_1(y,z,x)=\frac1{(2\pi)^2}\int_{{\mathbb{R}}^2}e^{{\rm i}(y-z)\cdot\xi}
c_1(y,x,\xi)d\xi,
\end{align*}
which is compactly supported in
${\mathcal{D}}^{\theta}:={\mathcal{D}}\times{\mathcal{D}}\times\text{supp}
(\theta)$. Moreover, $C_1$ is a conormal distribution in ${\mathbb{R}}^6$ of
H\"ormander type having conormal singularity on the surface
$S:=\{(y,z,x)\in{\mathbb{R}}^6 : y-z=0\}$ and is invariant under a change of
coordinates \cite{H07}.

To calculate the integral in \eqref{eq:u1square}, different coordinates systems
will be considered. Define an invertible transformation
$\tau:{\mathbb{R}}^6\to{\mathbb{R}}^6$ by
\[
\tau(y,z,x)=(g,h,x),
\]
where $g=(g_1,g_2)$ and $h=(h_1,h_2)$ with 
\begin{align*}
g_1=\frac12 (|x-y|-|x-z|), \quad
g_2=\frac12\bigg[|x-y|\arcsin\Big(\frac{y_1-x_1}{|x-y|}
\Big)-|x-z|\arcsin\Big(\frac{z_1-x_1}{|x-z|}\Big)\bigg],\\
h_1=\frac12 (|x-y|+|x-z|),\quad
h_2=\frac12\bigg[|x-y|\arcsin\Big(\frac{y_1-x_1}{|x-y|}
\Big)+|x-z|\arcsin\Big(\frac{z_1-x_1}{|x-z|}\Big)\bigg].
\end{align*}
Under the new coordinates system, we get
\begin{eqnarray}\label{eq:u0square2}
I_{l_1,l_2}(x;k)&=&\int_{{\mathbb{R}}^2}\int_{{\mathbb{R}}^2}e^{{\rm i}
\kappa_{\rm
r}\left(|x-y|-|x-z|\right)-\kappa_{\rm i}\left(|x-y|+|x-z|\right)}\frac{
C_1(y,z,x)}{|x-y|^{\frac12+l_1}|x-z|^{\frac12+l_2}}dydz\nonumber\\
&=&\int_{{\mathbb{R}}^2}\int_{{\mathbb{R}}^2}e^{{\rm i}2\kappa_{\rm r}
\left(e_1\cdot g\right)-2\kappa_{\rm i}\left(e_1\cdot h\right)}C_2(g,h,x)dgdh,
\end{eqnarray}
where $e_1=(1,0)$ and 
\begin{align}\label{eq:C2}
C_2(g,h,x)=&~C_1(\tau^{-1}(g,h,x))\frac{\det((\tau^{-1})'(g,h,x))}{
\left((g+h)\cdot e_1\right)^{\frac12+l_1}\left((h-g)\cdot
e_1\right)^{\frac12+l_2}}\nonumber\\
=&:C_1(\tau^{-1}(g,h,x))L^\tau(g,h,x). 
\end{align}

To get a detailed expression of $C_2$ as well as its principal symbol, we define
another invertible transformation $\eta:{\mathbb{R}}^6\to{\mathbb{R}}^6$ by
\[
\eta(y,z,x)=(v,w,x),
\]
where $v=y-z$ and $w=y+z$. Consider the pull-back $C_3:=C_1\circ \eta^{-1}$
satisfying
\begin{eqnarray*}
C_3(v,w,x)&=& C_1(\eta^{-1}(v,w,x))=C_1\Big(\frac{v+w}2,\frac{w-v}2,x\Big)\\
&=& \frac1{(2\pi)^2}\int_{{\mathbb{R}}^2}e^{{\rm i}
v\cdot\xi}c_1\Big(\frac{v+w}2,x,\xi\Big)d\xi
=\frac1{(2\pi)^2}\int_{{\mathbb{R}}^2}e^{{\rm i}
v\cdot\xi}c_3\left(w,x,\xi\right)d\xi,
\end{eqnarray*}
where we have used the properties of symbols (cf. \cite[Lemma 18.2.1]{H07}) and
that $c_3$ has the following asymptotic expansion: 
\begin{eqnarray*}
c_3(w,x,\xi)&= &e^{-{\rm i}\left\langle
D_v,D_{\xi}\right\rangle}c_1\Big(\frac{v+w}2,x,\xi\Big)\Big|_{v=0
}\\
&\sim&\sum_{j=0}^{\infty}\frac{\langle-{\rm i}
D_v,D_\xi\rangle^j}{j!}c_1\Big(\frac{v+w}2,x,\xi\Big)\Big|_{v=0}.
\end{eqnarray*}
Moreover, the principal symbol of $c_3$ is 
\[
c_3^p(w,x,\xi)=c_1^p\Big(\frac{w}2,x,\xi\Big)=\mu\left(\frac{w}
2\right)\theta(x)|\xi|^{-2s}.
\]

Finally, we define a diffeomorphism
$\gamma:=\eta\circ\tau^{-1}:(g,h,x)\mapsto(v,w,x)$, which preserves the plane
$\{(g,h,x)\in{\mathbb{R}}^6 : g=0\}$, i.e., if $g=0$ then $v=0$.
By Theorem 18.2.9 in \cite{H07}, the pull-back $C_4:=C_3\circ\gamma$ can
be calculated by
\[
C_4(g,h,x)=C_3(\gamma(g,h,x))=\frac1{(2\pi)^2}\int_{{\mathbb{R}}^2}e^{{\rm i}
g\cdot\xi}c_4\left(h,x,\xi\right)d\xi,
\]
where
\begin{eqnarray*}
c_4(h,x,\xi)&=&c_3\big(\gamma_2(0,h,x),(\gamma_{11}'(0,h,x))^{-\top}
\xi\big)\left|\det\left(\gamma_{11}'(0,h,x)\right)\right|^{-1}+r_3(h,x,\xi)\\
&=& c_3^p\big(\gamma_2(0,h,x),(\gamma_{11}'(0,h,x))^{-\top}
\xi\big)\left|\det\left(\gamma_{11}'(0,h,x)\right)\right|^{-1}+r_4(h,x,\xi).
\end{eqnarray*}
Here the residuals $r_3,r_4\in S^{-2s-1}$, $\gamma=(\gamma_1,\gamma_2)$ with
$\gamma_1(g,h,x)=v$ and $\gamma_2(g,h,x)=(w,x)$, and $\gamma_{11}'$ is
determined by the Jacobian matrix
\[
\gamma'=\left[\begin{array}{cc}\gamma_{11}'&\gamma_{12}'\\\gamma_{21}'&\gamma_{
22}'\end{array}\right].
\]
Hence, $c_4\in S^{-2s}$ is still $C^{\infty}$-smooth and compactly supported in
the variables $(h,x)$ with the principal symbol
\begin{align}\label{eq:c4}
c_4^p(h,x,\xi)=\mu\Big(\frac{w(0,h,x)}2\Big)\theta(x)\left|(\gamma_{11}
'(0,h,x))^{-\top}\xi\right|^{-2s}\left|\det(\gamma_{11}'(0,h,
x))\right|^{-1}.
\end{align}

Noting that
$C_4=C_3\circ\gamma=C_1\circ\eta^{-1}\circ\eta\circ\tau^{-1}=C_1\circ\tau^{-1}
$ and combining with \eqref{eq:C2}, we obtain 
\begin{eqnarray}\label{eq:C2c5}
&&C_2(g,h,x)=C_4(g,h,x)L^\tau(g,h,x)\nonumber\\
&&=\frac1{(2\pi)^2}\int_{{\mathbb{R}}^2}e^{{\rm i}
g\cdot\xi}c_4\left(h,x,\xi\right)L^\tau(g,h,x)d\xi
=\frac1{(2\pi)^2}\int_{{\mathbb{R}}^2}e^{{\rm i}
g\cdot\xi}c_5(h,x,\xi)d\xi,
\end{eqnarray}
where we have used Lemma 18.2.1 in \cite{H07} again and the fact
that the function $L^\tau(g,h,x)$ is smooth in the domain
$\tau({\mathcal{D}}^\theta)$. Similar to the asymptotic expansion of $c_3$, we
have
\[
c_5(h,x,\xi)\sim\sum_{j=0}^{\infty}\frac{\langle-{\rm i}
D_g,D_\xi\rangle^j}{j!}\left(c_4\left(h,x,\xi\right)L^\tau(g,h,x)\right)\Big|_{
g=0}. 
\]
Using \eqref{eq:c4} and the expression of $L^\tau$ defined in
\eqref{eq:C2}, and residual $r_5:=c_5-c_5^p\in S^{-2s-1}$, we obtain
the principal symbol
\begin{eqnarray}\label{eq:c5}
c_5^p(h,x,\xi)&=&c_4^p\left(h,x,\xi\right)L^\tau(0,h,x)\nonumber\\
&=&\mu\Big(\frac{w(0,h,x)}2\Big)\theta(x)\left|(\gamma_{11}'(0,h,
x))^{-\top}\xi\right|^{-2s}\frac{\det((\tau^{-1})'(0,h,
x))}{\left|\det(\gamma_{11}'(0,h,x))\right|(h\cdot
e_1)^{1+l_1+l_2}}. 
\end{eqnarray}

Let $\alpha=\frac{h_2}{h_1}$. Simple calculations show that 
\begin{eqnarray*}
\gamma_{11}'(0,h,x)&=&\frac{\partial v}{\partial g}(0,h,x)=\left[
\begin{array}{cc}
\frac{\partial v_1}{\partial g_1}&\frac{\partial v_1}{\partial g_2}\\[2pt]
\frac{\partial v_2}{\partial g_1}&\frac{\partial v_2}{\partial g_2}
\end{array}
\right](0,h,x)\\
&=& 2\left[
\begin{array}{cc}
\sin\alpha-\alpha\cos\alpha&\cos\alpha\\
\cos\alpha+\alpha\sin\alpha&-\sin\alpha
\end{array}
\right]
\end{eqnarray*}
is invertible since $\det(\gamma_{11}'(0,h,x))=-4$ and
$\gamma_2(0,h,x)=(w(0,h,x),x)$ with
\[
w(0,h,x)=\Big(2h_1\sin\Big(\frac{h_2}{h_1}\Big)+2x_1,2h_1\cos\Big(\frac{h_2
}{h_1}\Big)+2x_2\Big).
\] 
Moreover, a straightforward calculation gives 
\begin{eqnarray*}
&&\left(\tau^{-1}\right)'(0,h,x)=\frac{\partial(y,z,x)}{\partial(g,h,x)}\Bigg|_{
g=0}\\
&&=\left[\begin{array}{cccccc}
\sin\alpha-\alpha\cos\alpha&\cos\alpha&\sin\alpha-\alpha\cos\alpha&\cos\alpha&1&
0\\
\cos\alpha+\alpha\sin\alpha&-\sin\alpha&\cos\alpha+\alpha\sin\alpha&-\sin\alpha&
0&1\\
-\sin\alpha+\alpha\cos\alpha&-\cos\alpha&\sin\alpha-\alpha\cos\alpha&\cos\alpha&
1&0\\
-\cos\alpha-\alpha\sin\alpha&\sin\alpha&\cos\alpha+\alpha\sin\alpha&-\sin\alpha&
0&1\\
0&0&0&0&1&0\\
0&0&0&0&0&1
\end{array}\right]
\end{eqnarray*}
and $\det((\tau^{-1})'(0,h,x))=4$.

Combining \eqref{eq:u0square2} and \eqref{eq:C2c5}--\eqref{eq:c5}, we obtain 
\begin{eqnarray*}
I_{l_1,l_2}(x;k)&=&\int_{{\mathbb{R}}^2}\int_{{\mathbb{R}}^2}e^{{\rm i}2
\kappa_{\rm r}(e_1\cdot g)-2\kappa_{\rm i}(e_1\cdot h)}\\
&&\qquad \times\left[\frac1{(2\pi)^2}\int_{{\mathbb{R}}^2}e^{{\rm i}
g\cdot\xi}\Big(c_4^p(h,x,\xi)L^\tau(0,h,x)+r_5(h,x,\xi)\Big)
d\xi\right]dgdh\\
&=&\int_{{\mathbb{R}}^2}\int_{{\mathbb{R}}^2}e^{-2\kappa_{\rm i}(e_1\cdot
h)}\Big[c_4^p(h,x,\xi)L^\tau(0,h,x)+r_5(h,x,\xi)\Big]\delta(2
\kappa_{\rm r}e_1+\xi)d\xi dh\\
&=&\int_{{\mathbb{R}}^2}e^{-2\kappa_{\rm i}(e_1\cdot
h)}\Big[c_4^p\left(h,x,-2\kappa_{\rm r}e_1\right)L^\tau(0,h,x)+r_5(h,x,-2
\kappa_{\rm r} e_1)\Big]dh\\
&=&\bigg[\int_{{\mathbb{R}}^2}e^{-2\kappa_{\rm i}(e_1\cdot
h)}\mu\Big(\frac{w(0,h,x)}2\Big)\theta(x)\left|(\gamma_{11}'(0,h,
x))^{-\top}(-2\kappa_{\rm r}e_1)\right|^{-2s}\\
&&\qquad \times\frac{1}{(e_1\cdot
h)^{1+l_1+l_2}}dh+O(\kappa_{\rm r}^{-2s-1})\bigg]\\
&=&\bigg[\int_{{\mathbb{R}}^2}\frac{e^{-2\kappa_{\rm i}(e_1\cdot h)}}{(e_1\cdot
h)^{1+l_1+l_2}}\mu\Big(\frac{w(0,h,x)}2\Big)\theta(x)dh\bigg]\kappa_{\rm
r}^{-2s}+O(\kappa_{\rm r}^{-2s-1})\\
&=&:M_{l_1,l_2}^\kappa(x)\kappa_{\rm r}^{-2s}+O(\kappa_{\rm
r}^{-2s-1}),
\end{eqnarray*}
where we have used the fact that $
\delta(\xi)=\frac1{(2\pi)^d}\int_{\mathbb{R}^d}e^{{\rm i} x\cdot\xi}d\xi
$ in the second step and 
\begin{align*}
M_{l_1,l_2}^\kappa(x)=\int_{{\mathbb{R}}^2}\frac{e^{-2\kappa_{\rm i}(e_1\cdot
h)}}{(e_1\cdot h)^{1+l_1+l_2}}\mu\Big(\frac{w(0,h,x)}2\Big)\theta(x)dh. 
\end{align*}
To simplify the expression of $M^\kappa_{l_1,l_2}(x)$, we consider another
coordinate transformation $\rho:{\mathbb{R}}^2\to{\mathbb{R}}^2$ defined by 
\[
\rho(h)=\zeta:=\left(h_1\sin\left(\frac{h_2}{h_1}\right),h_1\cos\left(\frac{h_2}
{h_1}\right)\right)+x,
\]
which has the Jacobian
\[
\text{det}(\rho')=\left|\begin{array}{cc}
\sin\left(\frac{h_2}{h_1}\right)-\frac{h_2}{h_1}\cos\left(\frac{h_2}{h_1}
\right)&\cos\left(\frac{h_2}{h_1}\right)\\[2pt]
\cos\left(\frac{h_2}{h_1}\right)+\frac{h_2}{h_1}\sin\left(\frac{h_2}{h_1}
\right)&-\sin\left(\frac{h_2}{h_1}\right)
\end{array}\right|=-1.
\]
Noting that $\det((\rho^{-1})')=\frac1{\det(\rho')}=-1$, we get
\[
M_{l_1,l_2}^\kappa(x)=\int_{{\mathbb{R}}^2}\frac{e^{-2\kappa_{\rm i}|x-\zeta|}}{
|x-\zeta|^{1+l_1+l_2}}\mu\left(\zeta\right)d\zeta,\quad x\in{\mathcal{U}}.
\]
Combining the above estimates, we obtain 
\begin{eqnarray*}
{\mathbb{E}}|u^2(x;k)|^2&=&\frac{|a_{0}|^2}{16|\kappa|}I_{0,0}(x;k)
+\Re\left[\frac{a_{0}\bar{a}_1}{8|\kappa|\kappa}I_{0,1}(x;k)\right]+\frac
{|a_{1}|^2}{16|\kappa|^3}I_{1,1}(x;k)\\
&&\quad +\Re\left[\frac{a_{0}\bar{a}_2}{8|\kappa|\kappa^2}I_{0,2}
(x;k)\right ]
+\Re\left[\frac{a_{1}\bar{a}_2}{8|\kappa|^3\kappa}I_{1,2}(x;k)\right]
+\frac{|a_{2}|^2}{16|\kappa|^5}I_{2,2}(x;k)\\
&=&\frac{|a_{0}|^2}{16|\kappa|}\left[M_{0,0}^\kappa(x)\kappa_{\rm r}^{-2s}
+O(\kappa_{\rm r}^{-2s-1})\right]\\
&&\quad +\Re\left[\frac{a_{0}\bar{a}_1}{8|\kappa|\kappa}\left(M_{0,1}
^\kappa(x)\kappa_{\rm
r}^{-2s}+O(\kappa_{\rm r}^{-2s-1})\right)\right]\\
&&\quad +\frac{|a_{1}|^2}{16|\kappa|^3}\left[M_{1,1}^\kappa(x)\kappa_{\rm
r}^{-2s}+O(\kappa_{\rm r}^{-2s-1})\right]\\
&&\quad +\Re\left[\frac{a_{0}\bar{a}_2}{8|\kappa|\kappa^2}\left(M_{0,2}
^\kappa(x)\kappa_{\rm
r}^{-2s}+O(\kappa_{\rm r}^{-2s-1})\right)\right]\\
&&\quad +\Re\left[\frac{a_{1}\bar{a}_2}{8|\kappa|^3\kappa}\left(M_{1,2}
^\kappa(x)\kappa_{\rm r}^{-2s}+O(\kappa_{\rm r}^{-2s-1})\right)\right]\\
&&\quad +\frac{|a_{2}|^2}{16|\kappa|^5}\left[M_{2,2}^\kappa(x)\kappa_{\rm
r}^{-2s} +O(\kappa_{\rm r}^{-2s-1})\right]\\
&=&\frac{|a_{0}|^2}{16}M_{0,0}^\kappa(x)|\kappa|^{-1}\kappa_{\rm r}^{-2s}
+O(\kappa_{\rm r}^{-2s-2}),
\end{eqnarray*}
which completes the proof.
\end{proof}

\begin{theorem}\label{tm:main2d}
Let $f\in L^2(\Omega,W^{H-\epsilon,p})$ with $H,\epsilon$, and $p$
satisfying the conditions given in Theorem \ref{tm:solution}. Then for any
$x\in{\mathcal{U}}$,
\[
\lim_{k\to\infty}k^{2s+1}{\mathbb{E}}|u(x;k)|^2=\frac{1}{2^3\pi}\int_{{\mathbb{R
}}^2}\frac{e^{-\sigma|x-y|}}{|x-y|}\mu\left(y\right)dy=:T(x).
\]
\end{theorem}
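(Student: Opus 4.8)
The plan is to read off the limit from Proposition~\ref{prop:u1}, which already isolates the leading term of the second moment of the truncated field $u^2$, and then to show that passing from $u^2$ to the true solution $u$ does not alter the limit. For the first point, Proposition~\ref{prop:u1} gives
\[
k^{2s+1}\,\mathbb{E}|u^2(x;k)|^2=T_\kappa(x)\,\frac{k^{2s+1}}{|\kappa|\,\kappa_{\rm r}^{2s}}+k^{2s+1}\,O(\kappa_{\rm r}^{-2s-2}).
\]
Using $|\kappa|^2=k\sqrt{k^2+\sigma^2}$, $\kappa_{\rm r}^2=\tfrac12(\sqrt{k^4+k^2\sigma^2}+k^2)$ and the limits in \eqref{kri}, one has $k/|\kappa|\to1$ and $k/\kappa_{\rm r}\to1$, hence the factor $\tfrac{k}{|\kappa|}\big(\tfrac{k}{\kappa_{\rm r}}\big)^{2s}\to1$, while $\kappa_{\rm r}\sim k$ gives $k^{2s+1}O(\kappa_{\rm r}^{-2s-2})=O(k^{-1})\to0$. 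Moreover $\kappa_{\rm i}\to\sigma/2$ by \eqref{kri}, and for $x\in\mathcal{U}$, $y\in\operatorname{supp}(\mu)\subset\mathcal{D}$ one has $|x-y|\ge r_0$, so the integrand of $T_\kappa(x)$ is dominated by $\mu(y)/r_0\in L^1(\mathbb{R}^2)$; dominated convergence yields $T_\kappa(x)\to T(x)$. Therefore $k^{2s+1}\mathbb{E}|u^2(x;k)|^2\to T(x)$.

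It remains to prove $k^{2s+1}\big(\mathbb{E}|u(x;k)|^2-\mathbb{E}|u^2(x;k)|^2\big)\to0$. I would write $u=u^2+w$ with $w(x;k)=-\int_{\mathbb{R}^2}(\Phi_\kappa-\Phi_\kappa^2)(x,y)f(y)\,dy$. Since $x\in\mathcal{U}$ is separated from $\operatorname{supp}(f)\subset\mathcal{D}$ by $r_0>0$, the remainder kernel $y\mapsto(\Phi_\kappa-\Phi_\kappa^2)(x,y)$ is $C^\infty$ on a neighbourhood of $\overline{\mathcal{D}}$, so after inserting a cut-off $\chi\in C_0^\infty$ equal to $1$ on $\mathcal{D}$ and supported in $\{|x-\cdot|\ge r_0/2\}$, the quantity $w(x;k)=-\langle f,\chi(\Phi_\kappa-\Phi_\kappa^2)(x,\cdot)\rangle$ is well defined and
\[
\mathbb{E}|w(x;k)|^2=\Big|\int_{\mathbb{R}^2}\!\int_{\mathbb{R}^2}K_f(y,z)\,\chi(y)\chi(z)\,(\Phi_\kappa-\Phi_\kappa^2)(x,y)\,\overline{(\Phi_\kappa-\Phi_\kappa^2)(x,z)}\,dy\,dz\Big|.
\]
By \eqref{eq:Hn} and $\kappa_{\rm i}>0$, the remainder obeys $|(\Phi_\kappa-\Phi_\kappa^2)(x,y)|\lesssim(|\kappa|\,|x-y|)^{-7/2}\lesssim|\kappa|^{-7/2}$ uniformly on $\{|x-y|\ge r_0/2\}$, while $K_f$, the kernel of a classical pseudo-differential operator of order $-2s$ with $0<2s\le d=2$, is locally integrable and compactly supported (cf.~Lemma~\ref{lm:kernel}); hence $\mathbb{E}|w(x;k)|^2\lesssim|\kappa|^{-7}$. (Equivalently, the duality bound $|\langle f,\phi\rangle|\le\|f\|_{W^{H-\epsilon,p}}\|\phi\|_{W^{-H+\epsilon,p'}}$ with $1/p+1/p'=1$, together with $f\in L^2(\Omega,W^{H-\epsilon,p})$ and $\|\chi(\Phi_\kappa-\Phi_\kappa^2)(x,\cdot)\|_{W^{-H+\epsilon,p'}}=O(|\kappa|^{-5/2})$, already gives $\mathbb{E}|w(x;k)|^2=O(|\kappa|^{-5})$.) From $|u|^2-|u^2|^2=2\Re(\overline{u^2}w)+|w|^2$, the Cauchy--Schwarz inequality, and $\mathbb{E}|u^2|^2=O(|\kappa|^{-1}\kappa_{\rm r}^{-2s})$ of Proposition~\ref{prop:u1}, we obtain $\big|\mathbb{E}|u(x;k)|^2-\mathbb{E}|u^2(x;k)|^2\big|=O(|\kappa|^{-4}\kappa_{\rm r}^{-s})$; since $\kappa_{\rm r},|\kappa|\sim k$ and $s\le d/2=1$, multiplying by $k^{2s+1}$ yields $O(k^{s-3})\to0$. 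Combined with the first part this proves $\lim_{k\to\infty}k^{2s+1}\mathbb{E}|u(x;k)|^2=T(x)$.

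The step I expect to be the main obstacle is the control of the truncation error $w=u-u^2$. Because the source here is genuinely rough ($H=s-\tfrac d2\le0$, so $f$ is only a distribution), $w$ is not an absolutely convergent integral and must first be given a meaning, after which one needs a quantitative rate of decay in $k$, not merely qualitative smallness. Both are supplied by the geometric separation $\operatorname{dist}(\mathcal{U},\mathcal{D})=r_0>0$: it turns the purely asymptotic expansion \eqref{eq:Hn} of the fundamental solution into honest uniform bounds for the remainder kernel (and, in the duality variant, for its derivatives) on $\operatorname{supp}(f)$, after which either the local integrability of $K_f$ or Sobolev duality with the assumed $L^2(\Omega,W^{H-\epsilon,p})$ bound closes the estimate. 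A minor additional point is to verify that the correction terms, of orders $k^{-1}$ and $k^{s-3}$, are genuinely subordinate to the $O(1)$ main term $T(x)$ throughout the admissible range $s\in(0,\tfrac d2]$ of Theorem~\ref{tm:solution}.
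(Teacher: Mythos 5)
Your proposal is correct and follows essentially the same route as the paper: extract the leading term from Proposition~\ref{prop:u1} together with \eqref{kri}, and control the truncation error $u-u^2$ via the Hankel asymptotics \eqref{eq:Hn}, the separation ${\rm dist}(\mathcal{U},\mathcal{D})=r_0>0$, and Cauchy--Schwarz for the cross term. Your primary bound $\mathbb{E}|w|^2\lesssim|\kappa|^{-7}$ obtained by pairing the covariance kernel with the uniformly small remainder is a slightly sharper variant of the paper's estimate, and your parenthetical duality bound $\|\Phi_\kappa-\Phi_\kappa^2\|_{W^{1,q}(\mathcal{D})}\,\mathbb{E}^{1/2}\|f\|^2_{W^{-1,p}(\mathcal{D})}=O(|\kappa|^{-5/2})$ is exactly the paper's argument, so both yield the stated limit.
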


\begin{proof}
Note that
\begin{eqnarray*}
k^{2s+1}{\mathbb{E}}|u(x;k)|^2&=&k^{2s+1}{\mathbb{E}}|u^2(x;k)|^2+2k^{2s+1}{
\mathbb{E}}\Re\Big[\overline{u^2(x;k)}(u(x;k)-u^2(x;k))\Big]\\
&&\quad+k^{2s+1}{\mathbb{E}}\left|u(x;k)-u^2(x;k)\right|^2\\
&=&:V_1(k)+V_2(k)+V_3(k).
\end{eqnarray*}
Next we calculate the limits of $V_1,V_2$, and $V_3$, respectively.

Using the asymptotic expansions of the Hankel function in \eqref{eq:Hn}, we get
\[
\big|H_n^{(1)}(\kappa|x-y|)-H_{n,N}^{(1)}
(\kappa|x-y|)\big|=O\big(|\kappa|x-y||^{-(N+\frac32)}\big),\quad
k\to\infty.
\]
Noting $H_0^{(1)'}(z)=-H_1^{(1)}(z)$, we have
\begin{align*}
\left|\partial_{y_i}H_0^{(1)}(\kappa|x-y|)-\partial_{y_i}H_{0,N}^{(1)}
(\kappa|x-y|)\right|=O\big(|\kappa|^{-(N+\frac12)}|x-y|^{
-(N+\frac32)}\big),\quad k\to\infty.
\end{align*}
Hence
\begin{eqnarray*}
&&{\mathbb{E}}|u(x;k)-u^2(x;k)|^2={\mathbb{E}}\left|\int_{\mathcal{D}}
\left(\Phi_\kappa(x,y)-\Phi_\kappa^2(x,y)\right)f(y)dy\right|^2\\
&&\lesssim\|\Phi_\kappa(x,\cdot)-\Phi_\kappa^2(x,\cdot)\|_{W^{1,q}({\mathcal{D}}
)}^2{\mathbb{E}}\|f\|_{W^{-1,p}({\mathcal{D}})}^2\\
&&\lesssim\|\Phi_\kappa(x,\cdot)-\Phi_\kappa^2(x,\cdot)\|_{W^{1,q}({\mathcal{D}}
)}^2{\mathbb{E}}\|f\|_{W^{H-\epsilon,p}({\mathcal{D}})}^2
\lesssim|\kappa|^{-5},
\end{eqnarray*}
where $f\in L^2(\Omega,W_{\rm comp}^{H-\epsilon,p})\subset
L^2(\Omega,W^{-1,p}_{\rm comp})$ for $H\in(\frac dp-2,0]$ and
$p\in\left(1,2\right]$ and $\frac1q+\frac1p=1$ according to Theorem
\ref{tm:solution} with $d=2$.
It then indicates that
\[
V_3(k)\lesssim k^{2s+1}|\kappa|^{-5}=k^{2s+1}(k^4+k^2\sigma^2)^{-\frac54}\to0
\]
as $k\to\infty$ since $s<2$ for $d=2$.

For $V_2(k)$, we have
\begin{align*}
V_2(k)\le2\left(k^{2s+1}{\mathbb{E}}|u^2(x;k)|^2\right)^{\frac12}\left(k^{2s+1}{
\mathbb{E}}|u(x;k)-u^2(x;k)|^2\right)^{\frac12}=2V_1(k)^{\frac12}V_3(k)^{\frac12
},
\end{align*}
which converges to $0$ if the limit of $V_1(k)$ exists.

For $V_1(k)$, by Proposition \ref{prop:u1},
\begin{align*}
V_1(k)=T_\kappa(x)k^{2s+1}|\kappa|^{-1}\kappa_{\rm
r}^{-2s}+O(k^{2s+1}\kappa_{\rm r}^{-2s-2}).
\end{align*}
We have from \eqref{kri} that
\[
\lim_{k\to\infty}V_1(k)=\lim_{k\to\infty}T_\kappa(x)=\frac{|a_0|^2}{16}\int_
{{\mathbb{R}}^2}\frac{e^{-\sigma|x-y|}}{|x-y|}\mu\left(y\right)dy,
\]
which completes the proof.
\end{proof}

\begin{remark}
It can be seen from the above proof that only two terms are needed in the
truncation of \eqref{eq:Hn} if the source is extremely rough with $s\in[0,\frac
d2)$. More
precisely, it suffices to consider the approximate solution
\[
u^1(x;k):=-\int_{\mathbb{R}^d}\Phi_\kappa^1(x,y)f(y)dy
\]
instead of $u^2$, where $V_3(k)\lesssim k^{2s+1}|\kappa|^{-3}\to0$ as
$k\to\infty$ since $s<\frac d2=1$.
\end{remark}

\begin{theorem}\label{tm:mu2d}
The strength $\mu$ is uniquely determined by
\[
T(x)=\frac{1}{2^3\pi}\int_{{\mathbb{R}}^2}\frac{e^{-\sigma|x-y|}}{|x-y|}
\mu\left(y\right)dy,\quad x\in{\mathcal{U}}.
\]
\end{theorem}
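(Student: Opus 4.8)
The plan is to prove that the linear map $\mu\mapsto T|_{\mathcal U}$ is injective. So let $\mu_1,\mu_2\in C_0^\infty(\mathcal D)$ be two admissible strengths producing the same data on $\mathcal U$, set $\mu:=\mu_1-\mu_2\in C_0^\infty(\mathcal D)$, $T:=T_{\mu_1}-T_{\mu_2}$, and $\Phi_0(z):=|z|^{-1}e^{-\sigma|z|}$, so that $T|_{\mathcal U}=0$ and $T=\frac1{8\pi}\Phi_0*\mu$; it suffices to show $\mu\equiv0$.

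First I would upgrade the vanishing on $\mathcal U$ to compact support of $T$. For $x$ outside $\overline{\mathcal D}$ the kernel $x\mapsto|x-y|^{-1}e^{-\sigma|x-y|}$ is real-analytic uniformly in $y\in\overline{\mathcal D}$, so differentiation under the integral sign shows $T$ is real-analytic on $\mathbb R^2\setminus\overline{\mathcal D}$; unique continuation from the nonempty open set $\mathcal U$ then forces $T\equiv0$ on the connected component of $\mathbb R^2\setminus\overline{\mathcal D}$ containing $\mathcal U$, hence (under the standing hypothesis that $\mathbb R^2\setminus\overline{\mathcal D}$ is connected) on the whole exterior of $\overline{\mathcal D}$. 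Since $\Phi_0\in L^1_{\rm loc}(\mathbb R^2)$ and $\mu\in C_0^\infty$, the potential $T=\frac1{8\pi}\Phi_0*\mu$ is in fact continuous (even $C^\infty$) on all of $\mathbb R^2$, so $T$ is a continuous function supported in a compact subset of $\overline{\mathcal D}$.

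Next I would work on the Fourier side. A computation in polar coordinates together with $\int_0^\infty e^{-\sigma r}J_0(r|\xi|)\,dr=(\sigma^2+|\xi|^2)^{-1/2}$ gives $\widehat{\Phi_0}(\xi)=2\pi(\sigma^2+|\xi|^2)^{-1/2}$, whence
\[
\widehat\mu(\xi)=4(\sigma^2+|\xi|^2)^{1/2}\,\widehat T(\xi),\qquad\xi\in\mathbb R^2 .
\]
Since $\mu$ and $T$ are compactly supported, $\widehat\mu$ and $\widehat T$ extend to entire functions on $\mathbb C^2$ of exponential type (Paley--Wiener), so squaring and the identity theorem give $\widehat\mu(\zeta)^2=16(\sigma^2+\zeta_1^2+\zeta_2^2)\widehat T(\zeta)^2$ on $\mathbb C^2$. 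To conclude, I would fix $e\in S^1$ and restrict to the complex line $\zeta=te$: if $\widehat T(\cdot\,e)\not\equiv0$, then at $t={\rm i}\sigma$ the left side of $\widehat\mu(te)^2=16(\sigma^2+t^2)\widehat T(te)^2$ vanishes to even order (being a square) while the right side vanishes to odd order ($\sigma^2+t^2$ having a simple zero there), which is impossible for $\sigma>0$. Hence $\widehat T(\cdot\,e)\equiv0$, and therefore $\widehat\mu(\cdot\,e)\equiv0$, for every $e\in S^1$; as $\{te:t\in\mathbb R,\,e\in S^1\}=\mathbb R^2$ this gives $\widehat\mu\equiv0$ on $\mathbb R^2$, i.e.\ $\mu=0$. (For $\sigma=0$ the same conclusion follows by comparing the identity $\widehat\mu(te)=4|t|\widehat T(te)$ on $\mathbb R$ with the entire identity $\widehat\mu(te)^2=16t^2\widehat T(te)^2$; alternatively, the case $\sigma=0$ is also covered by the single-realization, frequency-averaged result of the following subsection.)

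I expect this last step to be the crux. Once $T$ is known to have compact support, the naive move is to ``invert the potential'' via $\mu=4(-\Delta+\sigma^2)^{1/2}T$; but $(-\Delta+\sigma^2)^{1/2}$ is a \emph{nonlocal} operator — in two dimensions $\Phi_0$ is not a fundamental solution of the local operator $-\Delta+\sigma^2$ (whose fundamental solution is $\frac1{2\pi}K_0(\sigma|\cdot|)$) — so there is no a priori reason that $(-\Delta+\sigma^2)^{1/2}T$ be again compactly supported, and the Paley--Wiener/parity argument is exactly what delivers $\mu=0$. Two points that must not be skipped are the connectedness of $\mathbb R^2\setminus\overline{\mathcal D}$ (so that analytic continuation from $\mathcal U$ covers the entire exterior) and the local integrability of $\Phi_0$ (so that $T$ is genuinely continuous across $\partial\mathcal D$, hence compactly supported rather than merely vanishing outside $\overline{\mathcal D}$).
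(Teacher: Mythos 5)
Your argument is correct (granting the hypothesis you flag) but follows a genuinely different route from the paper's. The paper never leaves the measurement set $\mathcal{U}$: it applies iterated Laplacians to $T$ there, which generates kernels of the form $P(1/|x-y|)\,e^{-\sigma|x-y|}/|x-y|$, uses polynomial density in the radial variable to recover the spherical means $S(x,r)=\int_{|x-y|=r}\mu(y)\,ds(y)$ for $x\in\mathcal{U}$, and then deconvolves against a Gaussian, whose Fourier transform never vanishes. You instead linearize, continue the real-analytic difference potential $T$ to the exterior of $\overline{\mathcal{D}}$ to obtain compact support, compute the exact symbol $\widehat{\Phi_0}(\xi)=2\pi(\sigma^2+|\xi|^2)^{-1/2}$, and dispose of the nonlocal square root $(-\Delta+\sigma^2)^{1/2}$ by a Paley--Wiener argument combined with the parity of vanishing orders along complex lines at $t={\rm i}\sigma$; this is clean, and you correctly identify that compact support of $T$ is the crux, since $(-\Delta+\sigma^2)^{1/2}$ does not preserve compact supports. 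What each approach buys: yours is shorter and more structural on the Fourier side; the paper's uses only $T$ and its derivatives on $\mathcal{U}$, is insensitive to the geometry of the complement of $\mathcal{D}$, and is closer to a constructive reconstruction of $\mu$ through the spherical means.

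Two caveats. First, your standing assumption that $\mathbb{R}^2\setminus\overline{\mathcal{D}}$ is connected (equivalently, that $\mathcal{U}$ and infinity lie in the same component of the complement of ${\rm supp}\,\mu$) is not part of the theorem, and your proof genuinely needs it: if $\mathcal{U}$ sits in a hole of ${\rm supp}\,\mu$, unique continuation yields $T=0$ only in that hole, $T$ need not be compactly supported, and for $\sigma>0$ the exponential decay of $T$ extends $\widehat{T}$ holomorphically only to the tube $|\Im\zeta|<\sigma$, which is exactly where your zero $t={\rm i}\sigma$ sits; so the argument cannot be run there without a new idea. The paper avoids any such topological hypothesis because the only global analytic continuation it invokes concerns the Gaussian convolution $g*\mu$, which is real-analytic on all of $\mathbb{R}^2$. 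Second, your parenthetical fallback for $\sigma=0$ via the ergodicity proposition is misplaced: that result recovers the data $T$ from a single realization and says nothing about injectivity of $\mu\mapsto T$, which is what this theorem asserts; your main $\sigma=0$ argument (comparing $\widehat{\mu}(te)=4|t|\widehat{T}(te)$ on the real axis with the entire identity for the squares) is fine, so nothing essential is lost.
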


\begin{proof}
We first consider the function $ V(x):=e^{-\sigma|x|}/|x|^l$
for some positive number $\sigma$ and integer $l\ge1$, which can be regarded as
a composition of functions $U(s)=e^{-\sigma s}/s^l$ and $r(x)=|x|$,
i.e., $V(x)=U(r(x))$. A simple calculation shows that
\begin{eqnarray*}
\Delta V(x)&=&U''(r(x))\nabla r(x)\cdot\nabla r(x)+U'(r(x))\Delta r(x)\\
&=&\left[\frac{\sigma^2}{|x|^l}+\frac{2l\sigma}{|x|^{l+1}}+\frac{l(l+1)}{|x|^{
l+2}}\right]e^{-\sigma|x|}+\left[\frac{-\sigma}{|x|^l}+\frac{-l}{|x|^{l+1}}
\right]e^{-\sigma|x|}
\frac1{|x|}\\
&=&\left[\frac{l^2}{|x|^{l+2}}+\frac{(2l-1)\sigma}{|x|^{l+1}}+\frac{\sigma^2}{
|x|^l}\right]e^{-\sigma|x|}.
\end{eqnarray*}
Hence, if $T(x)$ is known in ${\mathcal{U}}$, then so is $\Delta^n T(x)$ for any
$n\in\mathbb{N}$. 
It implies that the following integral is determined by the measurement $T(x)$:
\begin{eqnarray*}
&&\int_{{\mathcal{D}}}P\Big(\frac1{|x-y|}\Big)\frac{e^{-\sigma|x-y|}}{|x-y|}
\mu(y)dy\\
&=&\int_{r_1}^{r_2}P\Big(\frac1r\Big)\frac{e^{-\sigma
r}}{r}\bigg[\int_{|x-y|=r}\mu(y)ds(y)\bigg]dr\\
&=&\int_{r_1^{-1}}^{r_2^{-1}}P(t)\frac{e^{-\sigma
t^{-1}}}{t^{-1}}\bigg[\int_{|x-y|=t^{-1}}\mu(y)ds(y)\bigg]\left(-\frac1{t^2}
\right)dt\\
&=&\int_{r_2^{-1}}^{r_1^{-1}}P(t)\frac{e^{-\sigma
t^{-1}}}{t}\bigg[\int_{|x-y|=t^{-1}}\mu(y)ds(y)\bigg]dt,
\end{eqnarray*}
where $P(t)=\sum_{j=0}^Jc_jt^j$ is any polynomial of order $J\in\mathbb{N}$ with
real numbers $c_j$, $j=0,\cdots,J$, $r_1=\min_{y\in{\mathcal{D}}}|x-y|\ge r_0>0$
and $r_2=\max_{y\in{\mathcal{D}}}|x-y|$.

Denote $S(x,r)=\int_{|x-y|=r}\mu(y)ds(y)$, which is continuous and compactly
supported on $[r_1,r_2]$.
Since the polynomial space on the interval $[r_2^{-1},r_1^{-1}]$ is dense in
$C([r_2^{-1},r_1^{-1}])$, the function $\frac{e^{-\sigma t^{-1}}}{t}S(x,t^{-1})$
can be uniquely determined on $[r_2^{-1},r_1^{-1}]$, and so does $S(x,t^{-1})$.
Hence $S(x,r)$ can be uniquely determined on $[r_1,r_2]$.

To recover the strength $\mu$ based on $S(x,t)$, the classical deconvolution
is used. More precisely, we consider the convolution between
$\mu$ and $g(x)=e^{-\frac{|x|^2}2}$:
\[
(g*\mu)(x)=\int_{r_1}^{r_2}e^{-\frac{r^2}2}S(x,r)dr,
\]
which is known since $S(x,r)$ can be recovered. Then the Fourier transform
yields
\[
\mathcal{F}[\mu](\xi)=\frac{\mathcal{F}[g*\mu](\xi)}{\mathcal{F}[g](\xi)}=e^{
-\frac{|\xi|^2}2}\mathcal{F}[g*\mu](\xi),
\]
which implies that $\mu$ can be uniquely determined. 
\end{proof}

\subsection{Three-dimensional case}

Now we consider $d=3$. By Theorem \ref{tm:solution}, the solution of
the direct problem is
\begin{equation}\label{dp3d}
u(x;k)=-\frac1{4\pi}\int_{{\mathbb{R}}^3}\frac{e^{{\rm i}\kappa|x-y|}}{
|x-y|}f(y)dy.
\end{equation}
Following the same procedure as that for the two-dimensional case, we first show
that the strength $\mu$ is uniquely determined by the variance of the solution
$u$.

\begin{theorem}\label{tm:main3d}
Assume that $f\in L^2(\Omega,W^{H-\epsilon,p})$ with $H,\epsilon$ and $p$
satisfying the conditions given in Theorem \ref{tm:solution}. Then for any
$x\in\mathcal U$, 
\[
\lim_{k\to\infty}k^{2s}{\mathbb{E}}|u(x;k)|^2=\frac{1}{2^4\pi^2}\int_{{\mathbb{R
}}^3}\frac{e^{-\sigma|x-y|}}{|x-y|^2}\mu\left(y\right)dy=:\tilde T(x).
\]
\end{theorem}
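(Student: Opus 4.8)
The plan is to follow the template of Proposition~\ref{prop:u1}, adapted to three dimensions; the computation is in fact shorter since, by \eqref{dp3d}, the fundamental solution $\Phi_\kappa(x,y)=\frac1{4\pi}e^{{\rm i}\kappa|x-y|}/|x-y|$ is already in closed form, so no truncation is needed and one works directly with $u$ rather than with an approximation $u^N$. For $x\in\mathcal U$ we have $x\notin\overline{\mathcal D}$, so $\Phi_\kappa(x,\cdot)$ is $C^\infty$ near $\mathcal D$; since $f$ is compactly supported in $\mathcal D$ with $f\in L^2(\Omega,W^{H-\epsilon,p})$, the random variable $u(x;k)=-\langle f,\Phi_\kappa(x,\cdot)\rangle$ is a well-defined Gaussian with finite second moment. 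Using the definition of the covariance operator, the kernel $K_f$ in \eqref{eq:Kf}, and Fubini's theorem (legitimate because $K_f$ is supported in $\mathcal D\times\mathcal D$ while $\Phi_\kappa(x,\cdot)\overline{\Phi_\kappa(x,\cdot)}$ is smooth near $\mathcal D$), one gets, for $x\in\mathcal U$,
\[
{\mathbb{E}}|u(x;k)|^2=\frac1{16\pi^2}I(x;k),\qquad
I(x;k):=\int_{\mathbb{R}^3}\!\int_{\mathbb{R}^3}\frac{e^{{\rm i}\kappa|x-y|-{\rm i}\bar\kappa|x-z|}}{|x-y|\,|x-z|}\,K_f(y,z)\,\theta(x)\,dy\,dz,
\]
where $\theta\in C_0^\infty(\mathbb{R}^3\setminus\overline{\mathcal D})$ satisfies $\theta|_{\mathcal U}\equiv1$ (inserting it does not change $I$ for $x\in\mathcal U$, but makes the amplitude smooth). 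By Assumption~\ref{as:f} and \eqref{eq:Kf}, $C_1(y,z,x):=K_f(y,z)\theta(x)=\frac1{(2\pi)^3}\int_{\mathbb{R}^3}e^{{\rm i}(y-z)\cdot\xi}c_1(y,x,\xi)\,d\xi$ is a compactly supported conormal distribution with conormal singularity on $\{y=z\}$ and symbol $c_1\in S^{-2s}$ with principal part $\mu(y)\theta(x)|\xi|^{-2s}$.

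I would then introduce the three-dimensional analogues of the coordinate changes $\tau,\eta,\gamma$ of Proposition~\ref{prop:u1}, built from spherical coordinates about $x$. Writing $y\leftrightarrow(r_y,\varphi_y)\in(0,\infty)\times\mathbb{R}^2$ and $z\leftrightarrow(r_z,\varphi_z)$ for the (angularly rescaled) spherical coordinates with pole at $x$, set $\tau:(y,z,x)\mapsto(g,h,x)$ with $g=\tfrac12\big((r_y,\varphi_y)-(r_z,\varphi_z)\big)$, $h=\tfrac12\big((r_y,\varphi_y)+(r_z,\varphi_z)\big)$, so that with $e_1=(1,0,0)$ the phase becomes ${\rm i}\,2\kappa_{\rm r}(e_1\cdot g)-2\kappa_{\rm i}(e_1\cdot h)$ and
\[
I(x;k)=\int_{\mathbb{R}^3}\!\int_{\mathbb{R}^3}e^{{\rm i}\,2\kappa_{\rm r}(e_1\cdot g)-2\kappa_{\rm i}(e_1\cdot h)}\,C_2(g,h,x)\,dg\,dh,\qquad C_2:=(C_1\circ\tau^{-1})\,L^\tau,
\]
where $L^\tau$ collects the Jacobian of $\tau^{-1}$ together with the amplitude $1/(|x-y|\,|x-z|)$ expressed in the new variables, and is smooth on $\tau(\mathcal D^\theta)$, $\mathcal D^\theta:=\mathcal D\times\mathcal D\times\mathrm{supp}(\theta)$; the polar-axis degeneracy of spherical coordinates is harmless once $\mu$ is subordinated to a partition of unity with small supports and the pole of $\tau$ is placed off the corresponding piece, which is possible since ${\rm dist}(\mathcal U,\mathcal D)=r_0>0$. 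With $\eta:(y,z,x)\mapsto(v,w,x)$, $v=y-z$, $w=y+z$, one has $C_3:=C_1\circ\eta^{-1}=\frac1{(2\pi)^3}\int e^{{\rm i}v\cdot\xi}c_3(w,x,\xi)\,d\xi$ with $c_3\in S^{-2s}$ and $c_3^p(w,x,\xi)=\mu(w/2)\theta(x)|\xi|^{-2s}$; and the diffeomorphism $\gamma:=\eta\circ\tau^{-1}:(g,h,x)\mapsto(v,w,x)$ carries $\{g=0\}$ onto $\{v=0\}$. By the conormal transformation rule (Theorem 18.2.9 and Lemma 18.2.1 in \cite{H07}), $C_2=\frac1{(2\pi)^3}\int e^{{\rm i}g\cdot\xi}c_5(h,x,\xi)\,d\xi$ with $c_5\in S^{-2s}$ compactly supported in $(h,x)$, residual $r_5:=c_5-c_5^p\in S^{-2s-1}$, and principal symbol
\[
c_5^p(h,x,\xi)=\mu\!\left(\tfrac{w(0,h,x)}2\right)\theta(x)\,\big|(\gamma_{11}'(0,h,x))^{-\top}\xi\big|^{-2s}\,\frac{\det((\tau^{-1})'(0,h,x))}{|\det\gamma_{11}'(0,h,x)|\,(e_1\cdot h)^2},
\]
where $\gamma_{11}'=\partial v/\partial g$ and the factor $(e_1\cdot h)^{-2}$ is $1/(|x-y|\,|x-z|)$ evaluated at $g=0$.

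Performing the $g$-integral gives $\int_{\mathbb{R}^3}e^{{\rm i}(2\kappa_{\rm r}e_1+\xi)\cdot g}\,dg=(2\pi)^3\delta(2\kappa_{\rm r}e_1+\xi)$, hence
\[
I(x;k)=\int_{\mathbb{R}^3}e^{-2\kappa_{\rm i}(e_1\cdot h)}\Big[c_5^p(h,x,-2\kappa_{\rm r}e_1)+r_5(h,x,-2\kappa_{\rm r}e_1)\Big]dh.
\]
Since $r_5\in S^{-2s-1}$ is compactly supported in $(h,x)$ and $0<e^{-2\kappa_{\rm i}(e_1\cdot h)}\le1$ on the support of the integrand, the $r_5$-term is $O(\kappa_{\rm r}^{-2s-1})$. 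For the principal term one computes, as in Proposition~\ref{prop:u1}, that $\gamma_{11}'(0,h,x)$ equals twice the Jacobian of the (rescaled) spherical-to-Cartesian map, so that $(\gamma_{11}'(0,h,x))^{-\top}e_1$ has length $\tfrac12$ (because $\nabla_y|x-y|$ is a unit vector) and therefore $\big|(\gamma_{11}'(0,h,x))^{-\top}(-2\kappa_{\rm r}e_1)\big|^{-2s}=\kappa_{\rm r}^{-2s}$; moreover $g=0$ forces $y=z$, so $w(0,h,x)/2=y$ and $e_1\cdot h=|x-y|$. Reverting to the physical variable through the final spherical-coordinate map $\rho:h\mapsto\zeta$ (with $e_1\cdot h=|x-\zeta|$ and $w(0,h,x)/2=\zeta$), whose Jacobian exactly cancels the leftover ratio $\det((\tau^{-1})'(0,h,x))/|\det\gamma_{11}'(0,h,x)|$, and using $\theta(x)=1$ on $\mathcal U$, we obtain
\[
I(x;k)=\kappa_{\rm r}^{-2s}\int_{\mathbb{R}^3}\frac{e^{-2\kappa_{\rm i}|x-\zeta|}}{|x-\zeta|^2}\,\mu(\zeta)\,d\zeta+O(\kappa_{\rm r}^{-2s-1}),\qquad x\in\mathcal U.
\]
Multiplying by $k^{2s}$ makes the remainder $O(\kappa_{\rm r}^{-1})\to0$, and since by \eqref{kri} $\kappa_{\rm r}/k\to1$ and $\kappa_{\rm i}\to\sigma/2$ as $k\to\infty$, dominated convergence (the integrand is bounded by $r_0^{-2}\mu(\zeta)$, integrable on the compact set $\mathrm{supp}(\mu)$) yields
\[
\lim_{k\to\infty}k^{2s}{\mathbb{E}}|u(x;k)|^2=\frac1{16\pi^2}\int_{\mathbb{R}^3}\frac{e^{-\sigma|x-\zeta|}}{|x-\zeta|^2}\,\mu(\zeta)\,d\zeta=\tilde T(x),
\]
since $\frac1{16\pi^2}=\frac1{2^4\pi^2}$.

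The hard part will be the second step: setting up the three-dimensional coordinate change $\tau$ from spherical coordinates — handling the polar-axis degeneracy by a partition of unity, pushing the conormal distribution $C_1$ through the diffeomorphism $\gamma$ with H\"ormander's conormal calculus to control $c_5^p$ and the residual $r_5\in S^{-2s-1}$, and verifying that all Jacobian and geometric factors (those from $\tau^{-1}$, $\gamma_{11}'$, the $\xi$-scaling, and $\rho$) combine to exactly $1$, so that precisely the weight $e^{-\sigma|x-\zeta|}/|x-\zeta|^2$ and the constant $\frac1{2^4\pi^2}$ appear. The remaining ingredients — well-posedness of the pairing defining $u(x;k)$, the Fubini reduction to $I(x;k)$, the bound on the $r_5$-remainder, and the passage to the limit via \eqref{kri} — are routine and parallel the two-dimensional case of Proposition~\ref{prop:u1} and Theorem~\ref{tm:main2d}.
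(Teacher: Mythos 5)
Your proposal is correct and follows essentially the same route as the paper: reduce $\mathbb{E}|u(x;k)|^2$ to the oscillatory integral with kernel $K_f(y,z)\theta(x)$, pass through the spherical-coordinate change $\tau$, the difference/sum map $\eta$, and $\gamma=\eta\circ\tau^{-1}$, apply H\"ormander's conormal calculus (Theorem 18.2.9, Lemma 18.2.1) to get $c_5=c_5^p+r_5$ with $r_5\in S^{-2s-1}$, integrate out $g$ via the delta function at $\xi=-2\kappa_{\rm r}e_1$, and revert to physical coordinates by $\rho$ before letting $k\to\infty$ via \eqref{kri}. The specific claims you defer as ``the hard part'' (that $|(\gamma_{11}'(0,h,x))^{-\top}e_1|=\tfrac12$, and that the Jacobian of $\rho$ cancels the leftover factor $\det((\tau^{-1})')/|\det\gamma_{11}'|$) are exactly what the paper verifies by explicit matrix computations ($\det(\partial v/\partial g)=8\sin\alpha$, $\det((\tau^{-1})')=8\sin^2\alpha$, $\det\rho'=\sin\alpha$), and your stated outcomes agree with those computations.
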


\begin{proof}
Using \eqref{dp3d}, we have for any $x\in{\mathcal{U}}$ that 
\begin{align*}
{\mathbb{E}}|u(x;k)|^2=&\frac1{16\pi^2}\int_{{\mathbb{R}}^3}\int_{{\mathbb{R}}^3
}\frac{e^{{\rm i}\kappa|x-y|-{\rm i}\bar\kappa|x-z|}}{|x-y||x-z|}
{\mathbb{E}}[f(y)f(z)]dydz\\
=&\frac1{16\pi^2}\int_{{\mathbb{R}}^3}\int_{{\mathbb{R}}^3}\frac{e^{{\rm i}
\kappa|x-y|-{\rm i}\bar\kappa|x-z|}}{|x-y||x-z|}K_f(y,
z)\theta(x)dydz\\
=&\frac1{16\pi^2}\int_{{\mathbb{R}}^3}\int_{{\mathbb{R}}^3}e^{{\rm i}
\kappa_{\rm
r}\left(|x-y|-|x-z|\right)-\kappa_{\rm i}\left(|x-y|+|x-z|\right)}\frac{ C_1(y ,
z,x)}{|x-y||x-z|}dydz,
\end{align*}
where $\theta_0^{\infty}$ such that $\theta|_{{\mathcal{U}}}\equiv1$ and
supp$(\theta)\subset{\mathbb{R}}^3\backslash\overline{{\mathcal{D}}}$, 
\[
C_1(y,z,x):=K_f(y,z)\theta(x)=\frac1{(2\pi)^3}\int_{{\mathbb{R}}^3}e^{{\rm
i}(y-z)\cdot\xi}c_1(y,x,\xi)d\xi.
\]
Here $c_1(y,x,\xi):=c(y,\xi)\theta(x)$ with the symbol $c(y,\xi)$ satisfying
\eqref{eq:Kf}. Then the principal symbol of $c_1$ has the form
\[
c_1^p(y,x,\xi)=\mu(y)\theta(x)|\xi|^{-2s}.
\]

We first define an invertible
transformation $\tau:{\mathbb{R}}^9\to{\mathbb{R}}^9$ by $
\tau(y,z,x)=(g,h,x)$,
where $g=(g_1,g_2,g_3)$ and $h=(h_1,h_2,h_3)$ with
\begin{eqnarray*}
&& g_1=\frac12\left(|x-y|-|x-z|\right),\quad
h_1=\frac12\left(|x-y|+|x-z|\right),\\
&&g_2=\frac12\bigg[|x-y|\arccos\Big(\frac{y_3-x_3}{|x-y|}
\Big)-|x-z|\arccos\Big(\frac{z_3-x_3}{|x-z|}\Big)\bigg],\\
&&h_2=\frac12\bigg[|x-y|\arccos\Big(\frac{y_3-x_3}{|x-y|}
\Big)+|x-z|\arccos\Big(\frac{z_3-x_3}{|x-z|}\Big)\bigg],\\
&&g_3=\frac12\bigg[|x-y|\arctan\Big(\frac{y_2-x_2}{y_1-x_1}
\Big)-|x-z|\arctan\Big(\frac{z_2-x_2}{z_1-x_1}\Big)\bigg],\\
&&h_3=\frac12\bigg[|x-y|\arctan\Big(\frac{y_2-x_2}{y_1-x_1}
\Big)+|x-z|\arctan\Big(\frac{z_2-x_2}{z_1-x_1}\Big)\bigg].
\end{eqnarray*}
Then 
\[
{\mathbb{E}}|u(x;k)|^2=\frac1{16\pi^2}\int_{{\mathbb{R}}^3}\int_{{\mathbb{R}}^3}
e^{2{\rm i}\kappa_{\rm r}(e_1\cdot g)-2\kappa_{\rm i}(e_1\cdot
h)}C_2(g,h,x)dgdh,
\]
where $e_1=(1,0,0)$ and
\begin{eqnarray*}
C_2(g,h,x)&=&C_1(\tau^{-1}(g,h,x))\frac{\text{det}\left((\tau^{-1})'(g,h,
x)\right)}{((g+h)\cdot e_1)((h-g)\cdot e_1)}\\
&=&:C_1(\tau^{-1}(g,h,x))L^\tau(g,h,x).
\end{eqnarray*}

Next is to get an explicit expression of $C_2$ with respect to $(g,h,x)$.
We define another invertible transformation
$\eta:{\mathbb{R}}^9\to{\mathbb{R}}^9$
by $\eta(y,z,x)=(v,w,x)$ with $v=y-z$ and $w=y+z$, and define the diffeomorphism
$\gamma:=\eta\circ\tau^{-1}:(g,h,x)\mapsto(v,w,x)$.
Following the same procedure as that used in Proposition \ref{prop:u1}, 
by defining $C_3:=C_1\circ\eta^{-1}$, we obtain
\begin{eqnarray*}
C_3(v,w,x)&=&C_1(\eta^{-1}(v,w,x))=C_1\Big(\frac{v+w}2,\frac{w-v}2,x\Big)\\
&=&\frac1{(2\pi)^3}\int_{\mathbb{R}^3}e^{{\rm
i}v\cdot\xi}c_1\Big(\frac{v+w}2,x,\xi\Big)d\xi
=\frac1{(2\pi)^3}\int_{\mathbb{R}^3}e^{{\rm
i}v\cdot\xi}c_3\left(w,x,\xi\right)d\xi,
\end{eqnarray*}
where $c_3$ has the principal symbol
$c_3^p(w,x,\xi)=c_1^p\left(\frac{v+w}2,x,\xi\right)|_{v=0}=\mu(\frac
w2)|\xi|^{-2s}\theta(x)$. By Theorem 18.2.9 in \cite{H07}, 
\begin{align*}
C_4(g,h,x):=C_3\circ\gamma(g,h,x)=\frac1{(2\pi)^3}\int_{\mathbb{R}^3}e^{{\rm 
i}g\cdot\xi}c_4(h,x,\xi)d\xi,
\end{align*}
where $c_4$ has the principal symbol
\[
c_4^p(h,x,\xi)=c_3^p\left(\gamma_2(0,h,x),\left(\gamma_{11}'(0,h,x)\right)^{
-\top}\xi\right)\left|\det\left(\gamma_{11}'(0,h,x)\right)\right|^{-1},
\]
and $\gamma_2(0,h,x)=(w(0,h,x),x), \gamma_{11}'(0,h,x)=\frac{\partial
v}{\partial g}(0,h,x)$. Noting that
$C_4=C_3\circ\gamma=C_1\circ\eta^{-1}\circ\eta\circ\tau^{-1}=C_1\circ\tau^{-1}$,
we are able to give the expression of $C_2$:
\begin{eqnarray*}
C_2(g,h,x)&=&C_1\circ\tau^{-1}(g,h,x)L^{\tau}(g,h,x)\\
&=&\frac1{(2\pi)^3}\int_{{\mathbb{R}}^3}e^{{\rm i} g\cdot
\xi}c_4(h,x,\xi)L^\tau(g,h,x)d\xi
=\frac1{(2\pi)^3}\int_{{\mathbb{R}}^3}e^{{\rm i} g\cdot
\xi}c_5(h,x,\xi)d\xi,
\end{eqnarray*}
where the principal symbol of $c_5$ is 
\begin{eqnarray*}
&&c_5^p(h,x,\xi)=c_4^p(h,x,\xi)L^{\tau}(0,h,x)
=\mu\Big(\frac{w(0,h,x)}2\Big)\theta(x)\\
&&\qquad \times\Big|\Big(\frac{\partial
v}{\partial
g}(0,h,x)\Big)^{-\top}\xi\Big|^{-2s}\Big|{\rm det}\Big(\frac{\partial
v}{\partial
g}(0,h,x)\Big)\Big|^{-1}\frac{\text{det}\left((\tau^{-1})'(0,h,x)\right)}{
(h\cdot e_1)^2}
\end{eqnarray*}
and the residual $r_5:=c_5-c_5^p\in S^{-2s-1}$. 

It then suffices to calculate $c_5^p$. Noting that
\begin{eqnarray*}
&&h_1+g_1=|x-y|,\quad h_1-g_1=|x-z|,\\
&&\frac{h_2+g_2}{h_1+g_1}=\arccos\Big(\frac{y_3-x_3}{|x-y|}\Big),\quad\frac{
h_2-g_2}{h_1-g_1}=\arccos\Big(\frac{z_3-x_3}{|x-z|}\Big),\\
&&\frac{h_3+g_3}{h_1+g_1}=\arctan\Big(\frac{y_2-x_2}{y_1-x_1}\Big),\quad\frac
{h_3-g_3}{h_1-g_1}=\arctan\Big(\frac{z_2-x_2}{z_1-x_1}\Big),
\end{eqnarray*}
we get
\begin{eqnarray*}
y_1&=&x_1+(h_1+g_1)\sin\left(\frac{h_2+g_2}{h_1+g_1}\right)\cos\left(\frac{
h_3+g_3}{h_1+g_1}\right),\\
y_2&=&x_2+(h_1+g_1)\sin\left(\frac{h_2+g_2}{h_1+g_1}\right)\sin\left(\frac{
h_3+g_3}{h_1+g_1}\right),\\
y_3&=&x_3+(h_1+g_1)\cos\left(\frac{h_2+g_2}{h_1+g_1}\right),\\
z_1&=&x_1+(h_1-g_1)\sin\left(\frac{h_2-g_2}{h_1-g_1}\right)\cos\left(\frac{
h_3-g_3}{h_1-g_1}\right),\\
z_2&=&x_2+(h_1-g_1)\sin\left(\frac{h_2-g_2}{h_1-g_1}\right)\sin\left(\frac{
h_3-g_3}{h_1-g_1}\right),\\
z_3&=&x_3+(h_1-g_1)\cos\left(\frac{h_2-g_2}{h_1-g_1}\right).\\
\end{eqnarray*}
A simple calculation yields that 
\[
\frac{\partial v}{\partial g}(0,h,x)=
2\left[
\begin{array}{ccc}
\sin\alpha\cos\beta-\alpha\cos\alpha\cos\beta+\beta\sin\alpha\sin\beta&\cos
\alpha\cos\beta&-\sin\alpha\sin\beta\\
\sin\alpha\sin\beta-\alpha\cos\alpha\sin\beta-\beta\sin\alpha\cos\beta&\cos
\alpha\sin\beta&\sin\alpha\cos\beta\\
\cos\alpha+\alpha\sin\alpha&-\sin\alpha&0
\end{array}
\right],
\]
where $\alpha:=\frac{h_2}{h_1}, \beta:=\frac{h_3}{h_1}$, and
\[
(\tau^{-1})'(0,h,x)=\left[
\begin{array}{ccc}
\frac12\frac{\partial v}{\partial g}&\frac12\frac{\partial v}{\partial
g}&I\\[4pt]
-\frac12\frac{\partial v}{\partial g}&\frac12\frac{\partial v}{\partial g}&I\\
0&0&I
\end{array}
\right].
\]
Here $I$ is the $3\times 3$ identity matrix. It can be verified that 
\[
\text{det}\Big(\frac{\partial v}{\partial g}(0,h,x)\Big)=8\sin\alpha,\quad 
L^\tau(0,h,x)=\frac{8\sin^2\alpha}{(h\cdot e_1)^2}, 
\]
and
\begin{align*}
\Big(\frac{\partial v}{\partial g}(0,h,x)\Big)^{-\top}=\frac12\left[
\begin{array}{ccc}
\sin\alpha\cos\beta&\cos\alpha\cos\beta+\alpha\sin\alpha\cos\beta&-\frac{
\sin\beta}{\sin\alpha}+\beta\sin\alpha\cos\beta\\[2pt]
\sin\alpha\sin\beta&\cos\alpha\sin\beta+\alpha\sin\alpha\sin\beta&\frac{
\cos\beta}{\sin\alpha}+\beta\sin\alpha\sin\beta\\
\cos\alpha&-\frac{\cos\beta}{\sin\alpha}
-\beta\sin\alpha\sin\beta&\beta\cos\alpha
\end{array}
\right].
\end{align*}
We then have 
\begin{eqnarray*}
{\mathbb{E}}|u(x;k)|^2&=&\frac1{16\pi^2}\int_{{\mathbb{R}}^3}\int_{{\mathbb{R}}
^3}e^{2{\rm i}\kappa_{\rm r}(e_1\cdot g)-2\kappa_{\rm i}(e_1\cdot
h)}C_2(g,h,x)dgdh\\
&=&\frac1{16\pi^2}\int_{{\mathbb{R}}^3}\int_{{\mathbb{R}}^3}e^{2{\rm i}
\kappa_{\rm r}(e_1\cdot g)-2\kappa_{\rm i}(e_1\cdot
h)}\frac1{(2\pi)^3}\int_{{\mathbb{R}}^3}e^{{\rm i} g\cdot
\xi}c_5(h,x,\xi)d\xi dgdh\\
&=&\frac1{16\pi^2}\int_{{\mathbb{R}}^3}e^{-2\kappa_{\rm i}(e_1\cdot
h)}c_5(h,x,-2\kappa_{\rm r}e_1)dh\\
&=&\frac1{16\pi^2}\int_{{\mathbb{R}}^3}e^{-2\kappa_{\rm i}(e_1\cdot
h)}\bigg[\mu\Big(\frac{w(0,h,x)}2\Big)\theta(x)\kappa_{\rm r}^{-2s}\frac{
\sin\alpha}{(h\cdot e_1)^2}+r_5(h,x,-2\kappa_{\rm r}e_1)\bigg]dh,
\end{eqnarray*}
where 
\[
\frac{w(0,h,x)}2=(h_1\sin\alpha\cos\beta,h_1\sin\alpha\sin\beta,
h_1\cos\alpha)+x.
\]

Define another coordinate transform $\rho:{\mathbb{R}}^3\to{\mathbb{R}}^3$ by
\[
\rho(h)=\zeta:=(h_1\sin\alpha\cos\beta,h_1\sin\alpha\sin\beta,h_1\cos\alpha)+x.
\]
By noting that $|\zeta-x|=h_1=h\cdot e_1$ and
$\det((\rho^{-1})')=\frac1{\det(\rho')}$ with 
\[
\rho'=\left[
\begin{array}{ccc}
\sin\alpha\cos\beta-\alpha\cos\alpha\cos\beta+\beta\sin\alpha\sin\beta&\cos
\alpha\cos\beta&-\sin\alpha\sin\beta\\
\sin\alpha\sin\beta-\alpha\cos\alpha\sin\beta-\beta\sin\alpha\cos\beta&\cos
\alpha\sin\beta&\sin\alpha\cos\beta\\
\cos\alpha+\alpha\sin\alpha&-\sin\alpha&0
\end{array}
\right],
\] 
the data ${\mathbb{E}}|u(x;k)|^2$ turns to be
\begin{align*}
{\mathbb{E}}|u(x;k)|^2=&\left[\frac1{2^4\pi^2}\int_{{\mathbb{R}}^3}\frac{e^{
-2\kappa_{\rm i}|\zeta-x|}}{|\zeta-x|^2}\mu(\zeta)\theta(x)dh\right]\kappa_{\rm
r}^{-2s}+O(\kappa_{\rm r}^{-2s-1}).
\end{align*}

Finally, for any $x\in{\mathcal{U}}$, we have from \eqref{kri} that 
\[
\lim_{k\to\infty}k^{2s}{\mathbb{E}}|u(x;k)|^2=\lim_{k\to\infty}\frac1{2^4\pi^2}
\int_{{\mathbb{R}}^3}\frac{e^{-2\kappa_{\rm i}|\zeta-x|}}{|\zeta-x|^2}
\mu(\zeta)dh\left(\frac{k}{\kappa_{\rm r}}\right)^{2s}=\tilde T(x),
\]
which completes the proof.
\end{proof}

Repeating basically the same proof as that of Theorem \ref{tm:mu2d}, we may
show the uniqueness of the inverse problem in three dimensions. 

\begin{theorem}
The strength $\mu$ is uniquely determined by
\[
\tilde
T(x)=\frac{1}{2^4\pi^2}\int_{{\mathbb{R}}^3}\frac{e^{-\sigma|x-y|}}{|x-y|^2}
\mu\left(y\right)dy,\quad x\in{\mathcal{U}}.
\]
\end{theorem}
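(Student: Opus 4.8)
The plan is to mirror the proof of Theorem~\ref{tm:mu2d} step by step, replacing its two-dimensional ingredients by three-dimensional ones. By Theorem~\ref{tm:main3d} the function $\tilde T$ is known on $\mathcal U$, and since $\mathcal U$ sits at positive distance from $\mathcal D\supset\mathrm{supp}\,\mu$, the kernel $e^{-\sigma|x-y|}/|x-y|^2$ is real analytic in $x$ on a neighbourhood of $\mathcal U$, so one may differentiate under the integral sign freely. The only genuinely new computation is the Laplacian of this kernel: writing $V(x)=e^{-\sigma|x|}/|x|^l$ as $U(|x|)$ with $U(s)=e^{-\sigma s}s^{-l}$ and using $\Delta V=U''(|x|)+\tfrac{2}{|x|}U'(|x|)$ in $\mathbb R^3$, a direct calculation gives, for every integer $l\ge2$,
\[
\Delta\!\left(\frac{e^{-\sigma|x|}}{|x|^{l}}\right)=\left[\frac{l(l-1)}{|x|^{l+2}}+\frac{2(l-1)\sigma}{|x|^{l+1}}+\frac{\sigma^{2}}{|x|^{l}}\right]e^{-\sigma|x|},
\]
which is the exact analogue of the identity used in Theorem~\ref{tm:mu2d}, the factor $2/|x|$ replacing the $1/|x|$ coming from $\Delta|x|$ in dimension two. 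Starting from $l=2$, the power occurring in $\tilde T$, each application of $\Delta$ raises the top inverse power by two, so $\Delta^n\tilde T(x)$ is for each $n$ an explicit linear combination of the radial integrals $\int_{\mathcal D}|x-y|^{-m}e^{-\sigma|x-y|}\mu(y)\,dy$ with $2\le m\le 2n+2$. Tracking the coefficients exactly as in Theorem~\ref{tm:mu2d} then shows that for every polynomial $P$ the quantity $\int_{\mathcal D}P(1/|x-y|)\,\tfrac{e^{-\sigma|x-y|}}{|x-y|^{2}}\mu(y)\,dy$ is determined by $\tilde T|_{\mathcal U}$.

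From here the argument is verbatim that of the two-dimensional case. Passing to polar coordinates about $x$ and writing $S(x,r)=\int_{|x-y|=r}\mu(y)\,ds(y)$, which is continuous and vanishes outside $[r_1,r_2]$ with $r_1=\mathrm{dist}(x,\mathcal D)\ge r_0>0$ and $r_2=\max_{y\in\mathcal D}|x-y|$, the above integral becomes $\int_{r_1}^{r_2}P(1/r)\tfrac{e^{-\sigma r}}{r^{2}}S(x,r)\,dr$; the substitution $t=1/r$ turns it into $\int_{1/r_2}^{1/r_1}P(t)\,\psi_x(t)\,dt$ with $\psi_x(t)=e^{-\sigma/t}S(x,1/t)\in C([1/r_2,1/r_1])$. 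By the Weierstrass theorem the polynomials are dense in $C([1/r_2,1/r_1])$, so $\psi_x$, hence $S(x,r)$, is uniquely determined for every $x\in\mathcal U$ and every $r$. Finally one recovers $\mu$ by the same Gaussian deconvolution as in Theorem~\ref{tm:mu2d}: with $g(x)=e^{-|x|^2/2}$ the convolution $(g*\mu)(x)=\int_{r_1}^{r_2}e^{-r^2/2}S(x,r)\,dr$ is then known and, being real analytic, is determined on all of $\mathbb R^3$; taking Fourier transforms and dividing by $\widehat g$ (which never vanishes) determines $\widehat\mu$, hence $\mu$.

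The step requiring the most care is the bookkeeping in the first paragraph: one must check that iterating $\Delta$ really does yield enough independent radial weights to recover the polynomial-weighted integral $\int_{\mathcal D}P(1/|x-y|)\tfrac{e^{-\sigma|x-y|}}{|x-y|^2}\mu(y)\,dy$ for every $P$ — equivalently, that the span of $\{\Delta^n\tilde T\}_{n\ge0}$ pairs nondegenerately against $C([1/r_2,1/r_1])$ after the substitution $t=1/r$. This hinges on the explicit coefficients in the Laplacian identity above, in particular on $l(l-1)\neq0$ for $l\ge2$, which is precisely what keeps the iteration from stalling at the power $l=2$; once this is in place, all the remaining steps follow exactly as in the two-dimensional proof.
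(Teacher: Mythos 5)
Your proposal is correct and follows essentially the same route as the paper, which for this theorem simply invokes the two-dimensional argument of Theorem \ref{tm:mu2d}; you carry out the adaptation explicitly, and your three-dimensional Laplacian identity $\Delta\big(e^{-\sigma|x|}|x|^{-l}\big)=\big[l(l-1)|x|^{-l-2}+2(l-1)\sigma|x|^{-l-1}+\sigma^{2}|x|^{-l}\big]e^{-\sigma|x|}$ is the right analogue of the paper's two-dimensional formula, with the nondegeneracy $l(l-1)\neq0$ for $l\ge2$ correctly identified as the point that keeps the iteration going for the kernel $|x-y|^{-2}$. Your added observation that $g*\mu$ is real analytic, so its knowledge on $\mathcal U$ extends to all of $\mathbb{R}^3$ before taking Fourier transforms, is a useful explicit justification of a step the paper leaves implicit.
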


\subsection{The case $\sigma=0$ and ergodicity}

If $\sigma=0$, the model \eqref{eq:model} reduces to the one
considered in \cite{LHL}. In this case, the ergodicity of the solution can be
obtained by following the same way which was investigated in \cite{LHL,LPS08}.
This result makes it possible to uniquely recover the strength $\mu$ by a single
realization of the measurements.

\begin{proposition}
Assume that $f\in L^2(\Omega,W^{H-\epsilon,p})$ with $H,\epsilon$ and $p$
satisfying the conditions given in Theorem \ref{tm:solution}.  Let $s=H+\frac
d2$. Then
\begin{itemize}
\item[(i)] if $d=2$,
\begin{align*}
\lim_{K\to\infty}\frac1{K-1}\int_1^Kk^{2s+1}|u(x;k)|^2dk=T(x)\quad\text{a.s.,}
\end{align*}
\item[(ii)] if $d=3$,
\begin{align*}
\lim_{K\to\infty}\frac1{K-1}\int_1^Kk^{2s}|u(x;k)|^2dk=\tilde
T(x)\quad\text{a.s.,}
\end{align*}
\end{itemize}
where $T$ and $\tilde T$ are defined in Theorems \ref{tm:main2d} and
\ref{tm:main3d}, respectively.
\end{proposition}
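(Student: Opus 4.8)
The plan is to follow the ergodicity argument for random source problems in \cite{LHL,LPS08}: reduce the almost sure statement to an $L^2$ law of large numbers, obtained from a decay estimate for the correlation of $|u(x;k_1)|^2$ and $|u(x;k_2)|^2$ in $|k_1-k_2|$, and then upgrade $L^2$ convergence to a.s. convergence along a polynomial subsequence together with a monotonicity (sandwiching) argument. I only discuss $d=2$; the case $d=3$ is identical with the exponent $2s+1$ replaced by $2s$ and $T$ by $\tilde T$, using Theorem \ref{tm:main3d} in place of Theorem \ref{tm:main2d}. Fix $x\in{\mathcal{U}}$. Since ${\rm dist}({\mathcal{U}},{\mathcal{D}})=r_0>0$, the map $y\mapsto\Phi_\kappa(x,y)$ is smooth on a neighborhood of ${\mathcal{D}}$, so $u(x;k)=-\int_{\mathcal{D}}\Phi_\kappa(x,y)f(y)dy$ is, for each $k\ge1$, a well-defined complex Gaussian random variable (a continuous linear image of the real Gaussian source $f\in L^2(\Omega,W^{H-\epsilon,p})$). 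Writing $Z(k):=k^{2s+1}|u(x;k)|^2$ and using that $u(x;\cdot)$ is jointly Gaussian, the Wick (Isserlis) formula for the fourth Gaussian moment gives, for $k_1,k_2\ge1$,
\[
{\rm Cov}\big(Z(k_1),Z(k_2)\big)=(k_1k_2)^{2s+1}\Big(\big|{\mathbb{E}}[u(x;k_1)\overline{u(x;k_2)}]\big|^2+\big|{\mathbb{E}}[u(x;k_1)u(x;k_2)]\big|^2\Big),
\]
so everything reduces to bounding the two cross-covariances on the right.

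I expect this cross-covariance bound to be the main obstacle. Taking $\sigma=0$ (so $\kappa=k$) and, as in the proof of Theorem \ref{tm:main2d}, replacing $u$ by the truncation $u^2$ (the contributions of $u-u^2$ being controlled by the $|\kappa|^{-5}$ estimate established there and contributing only lower-order corrections), I would repeat the change of variables $\tau(y,z,x)=(g,h,x)$ and the pseudo-differential calculus of Proposition \ref{prop:u1}, but now with two distinct wavenumbers. For ${\mathbb{E}}[u(x;k_1)\overline{u(x;k_2)}]$ the phase becomes $(k_1-k_2)(e_1\cdot h)+(k_1+k_2)(e_1\cdot g)$: integrating out $g$ localizes the symbol $c_5$ at $\xi=-(k_1+k_2)e_1$, producing a factor $O\big((k_1+k_2)^{-2s}\big)$, while the remaining $h$-integral carries the oscillatory factor $e^{{\rm i}(k_1-k_2)(e_1\cdot h)}$ against an amplitude that is $C^\infty$-smooth (because $e_1\cdot h=\tfrac12(|x-y|+|x-z|)\ge r_0>0$ on the support of the integrand) and compactly supported in $h$; repeated integration by parts in $h_1$ then yields $O\big((1+|k_1-k_2|)^{-N}\big)$ for every $N$. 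Together with the prefactor $(k_1k_2)^{-1/2}$ coming from the $(\kappa|x-y|)^{-1/2}$ in the leading term of $u^2$, this gives $|{\mathbb{E}}[u(x;k_1)\overline{u(x;k_2)}]|\lesssim (k_1k_2)^{-1/2}(k_1+k_2)^{-2s}(1+|k_1-k_2|)^{-N}$. For ${\mathbb{E}}[u(x;k_1)u(x;k_2)]$ the phase is instead $(k_1+k_2)(e_1\cdot h)+(k_1-k_2)(e_1\cdot g)$; integrating out $g$ localizes $c_5$ at $\xi=-(k_1-k_2)e_1$, where the symbol is merely bounded (as $s\ge0$), and the surviving $h$-integral carries $e^{{\rm i}(k_1+k_2)(e_1\cdot h)}$ with $k_1+k_2\ge2$, so integration by parts yields $|{\mathbb{E}}[u(x;k_1)u(x;k_2)]|\lesssim (k_1k_2)^{-1/2}(k_1+k_2)^{-N}$. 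Using $(k_1k_2)^{2s}\lesssim (k_1+k_2)^{4s}$, both bounds feed into
\[
{\rm Cov}\big(Z(k_1),Z(k_2)\big)\lesssim (1+|k_1-k_2|)^{-2N}+(k_1+k_2)^{4s-2N},\qquad k_1,k_2\ge1,
\]
which, for $N$ chosen large, is an integrable majorant with $\int_1^K\!\!\int_1^K{\rm Cov}(Z(k_1),Z(k_2))\,dk_1dk_2\lesssim K$.

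It remains to upgrade this to the stated almost sure limit. Set $\bar Z_K:=\frac1{K-1}\int_1^KZ(k)\,dk$. By Theorem \ref{tm:main2d}, ${\mathbb{E}}Z(k)=k^{2s+1}{\mathbb{E}}|u(x;k)|^2\to T(x)$ as $k\to\infty$, hence ${\mathbb{E}}\bar Z_K\to T(x)$; and by the covariance bound,
\[
{\rm Var}(\bar Z_K)=\frac1{(K-1)^2}\int_1^K\!\!\int_1^K{\rm Cov}\big(Z(k_1),Z(k_2)\big)\,dk_1dk_2\lesssim \frac1{K-1},
\]
so $\bar Z_K\to T(x)$ in $L^2(\Omega)$. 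Along the subsequence $K_n=n^2$ one has $\sum_n{\rm Var}(\bar Z_{K_n})<\infty$, so Chebyshev's inequality and the Borel--Cantelli lemma give $\bar Z_{K_n}-{\mathbb{E}}\bar Z_{K_n}\to0$ a.s., whence $\bar Z_{K_n}\to T(x)$ a.s. Finally, since $Z\ge0$, for $K_n\le K\le K_{n+1}$ one has $\frac{K_n-1}{K_{n+1}-1}\,\bar Z_{K_n}\le\bar Z_K\le\frac{K_{n+1}-1}{K_n-1}\,\bar Z_{K_{n+1}}$, and $K_{n+1}/K_n\to1$, so letting $K\to\infty$ yields $\bar Z_K\to T(x)$ a.s., which is (i). Statement (ii) follows by the same argument with $Z(k)=k^{2s}|u(x;k)|^2$, Theorem \ref{tm:main3d}, and $\tilde T$ in place of $T$.
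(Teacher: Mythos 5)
Your proposal is correct and follows essentially the same route as the paper: the paper likewise reduces the almost sure statement to decay estimates for the cross-correlations $\mathbb{E}\big[u^2(x;k_1)\overline{u^2(x;k_2)}\big]$ and $\mathbb{E}\big[u^2(x;k_1)u^2(x;k_2)\big]$ in $|k_1-k_2|$, obtained by the same two-frequency variant of the stationary-phase/pseudodifferential computation of Proposition \ref{prop:u1}, combined with an ergodic-type law of large numbers for processes with decaying correlations cited from \cite{CL67,LHL,LPS08}. The only difference is that you inline both ingredients (the Isserlis identity together with the Chebyshev--Borel--Cantelli and sandwiching argument, and the correlation estimates themselves) rather than citing them, and you gloss the reduction from $u$ to the truncation $u^2$ at roughly the same level of detail as the paper does.
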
 

\begin{proof}
If $\sigma=0$, following the same procedure as that of Lemma 3.4 in \cite{LHL}
or Proposition \ref{prop:u1}, we may obtain for any $k_1,k_2\ge1$ that 
\begin{eqnarray*}
\left|\mathbb{E}\left[u^2(x;k_1)\overline{u^2(x;k_2)}\right]\right| &\le&
C(1+|k_1-k_2|)^{-2s},\\
\left|\mathbb{E}\left[u^2(x;k_1)u^2(x;k_2)\right]\right|&\le&
C(1+|k_1-k_2|)^{-2s}. 
\end{eqnarray*}
which, together with the fact that
\[
\lim_{K\to\infty}\frac1{K-1}\int_1^KX(t)dt=0,\quad a.s.,
\]
if $|\mathbb{E}X(t_1)X(t_2)|\le C(1+|t_1-t_2|)^{-\varepsilon}$ for
a centered real-valued stochastic process $X$ with continuous paths and some
$\varepsilon>0$ (cf. \cite{CL67,LHL,LPS08}), one can get the desired
results by following the proof in Theorem 3.10 in \cite{LHL}. The details are
omitted for brevity. 
\end{proof}

\section{Conclusion}
\label{sec:conc}

We have studied the inverse random source scattering problem for
the Helmholtz equations with attenuation. The source is assumed to be a
fractional Gaussian random field. The relationship is established between
the fractional Gaussian fields and the generalized Gaussian random fields.
The well-posedness of the direct problem is examined. For the inverse problem,
we show that the micro-correlation strength of the random source can be uniquely
determined by the passive measurement of the wave fields. 

There are some future works which can be considered. For instance, if the
medium is inhomogeneous, the solution cannot be expressed explicitly through
the fundamental solution. The present method is not applicable, a new approach
is needed. Another interesting problem is to consider that both the medium and
the source are random functions. Similar problems for the Schr\"odinger
equation were investigated in \cite{LLM1,LLM2}. The Helmholtz equation is more
difficult because of the coupling of the medium with the wavenumber. It is an
open problem for the Maxwell equations with a random source. The singularity of
Green's tensor may limit the roughness of the source. We hope to be able
to report the progress on these problems elsewhere in the future.

\end{document}